\theoremstyle{definition}
\newtheorem{theorem}{Theorem}
\newtheorem{corollary}[theorem]{Corollary}
\newtheorem{proposition}[theorem]{Proposition}
\newtheorem{lemma}[theorem]{Lemma}
\newtheorem{definition}[theorem]{Definition}
\newtheorem{example}[theorem]{Example}
\newtheorem{notation}[theorem]{Notation}
\newtheorem{remark}[theorem]{Remark}
\newtheorem*{theoremmm}{Theorem}
\newcommand{\numberset}{\mathbb}
\newcommand{\Z}{\numberset{Z}}
\newcommand{\F}{\numberset{F}}
\newcommand{\Pro}{\numberset{P}}
\newcommand{\Ol}{\mathcal{O}}
\begin{document}

\title[]{Embedding Suzuki curves in $\Pro^4$}

\author{Edoardo Ballico$^1$}
\address{Department of Mathematics, University of Trento\\ Via Sommarive 14,
38123 Povo (Trento), Italy}

\author{Alberto Ravagnani$^2$}
\address{Institut de Math\'{e}matiques, Universit\'{e} de Neuch\^{a}tel\\Rue
Emile-Argand 11, CH-2000 Neuch\^{a}tel, Switzerland}
\email{$^1$edoardo.ballico@unitn.it}
\email{$^2$alberto.ravagnani@unine.ch}

\subjclass[2010]{14G17, 14Q05, 14H50}
\keywords{Suzuki curve, embedding, hypersurface}

\begin{abstract}
Here we study the projective geometry of smooth models $X_n \subseteq \Pro^4$ of
plane
Suzuki curves $S_n$. The knowledge of a system of generators for the
Weierstrass semigroup
 at the only singular point of the curve is shown to have
relevant geometric
consequences. In particular, here we explicitly  count the hypersurfaces of
$\Pro^4$ containing $X_n$
and provide a geometric characterization of those of small degree. We prove that
the
characterization cannot 
be extended to higher-degree hypersurfaces of $\Pro^4$. 
\end{abstract}

%\tableofcontents

\maketitle

\section{Introduction}\label{intr0}
Let $n \ge 2$ be an integer and let $q_0$ and $q$ be defined by $q_0:=2^n$,
$q:=2q_0^2$.
Let  $\F_q$ denote the finite field with $q$ elements and fix any field $\F$
containing $\F_q$. For the rest of the
paper, $\F$ will be the base field.
Given an integer $r>0$, we denote by $\Pro^r$ the $r$-dimensional
projective space over $\F$. The projective plane $\Pro^2$ will be referred
to homogeneous coordinates $(x:y:z)$.

The \textbf{Suzuki curve} $S_n \subseteq
\Pro^2$ associated to the integer
$n$ is defined over $\F$ by the following affine equation: 
$$y^q-y=x^{q_0}(x^q-x)$$
(see \cite{HKT}, Example 5.24). This curve is known to have only one point 
lying on the
hyperplane at infinity $\{ z=0\}$, namely, $P_\infty:=(0:1:0)$.
This point, at which $S_n$ has a cusp, is also the only singular point of
the curve.
The genus of $S_n$ (i.e., by definition, the geometric genus of its
normalization)
is known to be $g_n:=q_0(q-1)$.

\subsection{Main references on Suzuki curves}

Suzuki curves are studied in depth
throughout the book \cite{HKT}. The are very interesting from a geometric viewpoint 
because of their optimality
(Chapter 10) and their large group of automorphisms (Theorem 11.127 and, more
generally, Section 12.2). Relevant properties of the Suzuki group date back to
\cite{He}. A comprehensive view on Suzuki curves and their quotients is given in
\cite{Giu}. On the same topics see also \cite{wi1} and \cite{wi2}, Chapter V. 
More recently, the $p$-torsion group scheme of Jacobians of Suzuki curves has
been studied in \cite{AMS}.

Interesting applications of Suzuki curves in geometric Coding
Theory have
been successfully considered in \cite{HH} and in \cite{MA}, computing also the
Weierstrass
semigroup associated to pairs of points of $S_n$ (\cite{MA}, Section III).

\subsection{Layout of the paper}

Here we consider a Suzuki curve $S_n$, as defined above, and its normalization
$\pi:C_n \to S_n$. The normalization morphism, $\pi$, is known to be injective.
In Section \ref{sec1} we study linear systems of the form $|m\pi^{-1}(P_\infty)|$, $m
\in \Z_{\ge 0}$. In particular, we give necessary and sufficient conditions for
$|m\pi^{-1}(P_\infty)|$ to be very ample. The smallest integer $m$ with this property is
$q+2q_0+1$. Moreover, the morphism induced by $|(q+2q_0+1)\pi^{-1}(P_\infty)|$ embeds
$C_n$ into $\Pro^4$. The curve obtained in this way, denoted by $X_n$, is a smooth model
of $S_n$ in $\Pro^4$. The
goal of the paper is to study the projective geometry of $X_n$. More
precisely, we are interested in explicitely counting the 
hypersurfaces of $\Pro^4$ containing $X_n$ and describing those of small
degree. Our main result is the following
one.

\begin{theoremmm}[see Theorem \ref{ttt} and Corollary \ref{bombetta}]
Let $X_n$ be the curve defined above and let $g_n=q_0(q-1)$ be its genus. The following 
facts hold.
\begin{enumerate}
 \item There exists a unique degree two hypersurface $Q_n \subseteq \Pro^4$ containing
$X_n$.
\item Let $2 \le t \le q_0$ be an integer. The degree $t$ hypersurfaces of
$\Pro^4$ containing
$X_n$ are exactly those containing $Q_n$. Moreover, they form an $\F$-vector
space of dimension
$\binom{t+4}{4}-\binom{t+2}{4}$.
\item The previous result is false for $t>q_0$. Indeed, there exist at least
four linearly 
independent degree $q_0+1$
hypersurfaces of $\Pro^4$ containing $X_n$, and not containing $Q_n$.
\item Let $t \ge 2q_0+1$ be an integer. The degree $t$ hypersurfaces of $\Pro^4$ containing $X_n$
form an $\F$-vector space of dimension $\binom{t+4}{4}-t(q+2q_0+1)-1+g_n$.
\end{enumerate}
\end{theoremmm}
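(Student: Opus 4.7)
The plan is to translate each claim into a counting problem involving the Weierstrass semigroup $H=H(P_\infty)=\langle q,\,q+q_0,\,q+2q_0,\,q+2q_0+1\rangle$. Let $x_0,\ldots,x_4$ be the embedding sections, with pole orders $0,q,q+q_0,q+2q_0,q+2q_0+1$ at $P:=\pi^{-1}(P_\infty)$. Any degree-$t$ monomial $x_0^{a_0}\cdots x_4^{a_4}$ restricts on $C_n$ to a rational function with a unique pole at $P$ of order
\[
\alpha(a)=kq+lq_0+j,\qquad k=a_1+a_2+a_3+a_4,\ l=a_2+2a_3+2a_4,\ j=a_4.
\]
Sections with distinct pole orders are linearly independent, so the image of the restriction $\rho_t\colon H^0(\Pro^4,\mathcal{O}(t))\to H^0(X_n,\mathcal{O}_{X_n}(t))$ has dimension equal to the number of distinct values $\alpha(a)$, and $\dim I_{X_n}(t)=\binom{t+4}{4}-\#\{\text{distinct }\alpha(a)\}$. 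All four assertions reduce to combinatorial statements about the admissible triples $(k,l,j)$ with $0\le j\le k\le t$, $2j\le l\le 2k$, and their images under $(k,l,j)\mapsto kq+lq_0+j$.

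For (1), a direct enumeration of the $15$ degree-two monomials exhibits exactly one pole-order coincidence, namely $x_1x_3\sim x_2^2$ at order $2q+2q_0$; hence $\dim I_{X_n}(2)=1$, and $Q_n$ is unique. For (2), the equivalence $x_2^2\sim x_1x_3$ preserves $(k,l,j)$, and the number of equivalence classes of degree-$t$ monomials equals $\sum_{k=0}^t(k+1)^2=\binom{t+4}{4}-\binom{t+2}{4}$ by a direct count. The crucial step is to prove that for $t\le q_0$ the map $(k,l,j)\mapsto kq+lq_0+j$ is injective on the admissible range. Using $q=2q_0^2$ and the divisibility $q_0\mid q$, a short case analysis shows that any would-be coincidence forces $|k_1-k_2|\in\{0,1\}$ and then, via a reduction modulo $q_0$, pushes $j$ or $l$ into configurations that violate either $j\le k\le t$ or $l\le 2k$ whenever $t\le q_0$. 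Combined with the trivial inclusion $I_{Q_n}(t)\subseteq I_{X_n}(t)$ and $\dim I_{Q_n}(t)=\binom{t+2}{4}$, equality follows.

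For (3), the same case analysis run at $t=q_0+1$ produces exactly four admissible extremal configurations, yielding the coincidences
\[
x_1^{q_0+1}\sim x_0x_3^{q_0},\quad x_1^{q_0}x_2\sim x_0x_4^{q_0},\quad x_2x_3^{q_0}\sim x_1x_4^{q_0},\quad x_3^{q_0+1}\sim x_2x_4^{q_0},
\]
each producing a polynomial of degree $q_0+1$ in $I_{X_n}$. Any $Q_n$-induced coincidence has leading pair $Mx_2^2\sim Mx_1x_3$ for $M$ a monomial of degree $q_0-1$, so none of these four relations lies in $I_{Q_n}(q_0+1)$; inspection of the leading monomial pairs confirms that the four are linearly independent modulo $I_{Q_n}(q_0+1)$.

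For (4), $t\ge 2q_0+1$ implies $tm>2g_n-2$, whence Riemann--Roch on $C_n$ gives $h^0(X_n,\mathcal{O}_{X_n}(t))=t(q+2q_0+1)+1-g_n$; the stated formula then follows from the ideal sequence $0\to I_{X_n}(t)\to\mathcal{O}_{\Pro^4}(t)\to\mathcal{O}_{X_n}(t)\to 0$ as soon as $\rho_t$ is surjective. Surjectivity is proved by showing that every $s\in H\cap[0,tm]$ is realized as $\alpha(a)$ for some $a$ with $\sum_i a_i\le t$: set $k:=\lceil s/m\rceil\le t$, observe that $r:=s-kq\ge 0$ because no element of $H$ can lie in the interval $((k-1)m,kq)$ (any representation of such an $s$ would force both $\sum a_i\ge k$ and $\sum a_i<k$), and then decompose $r=lq_0+j$ with the required constraints by adjusting the Euclidean quotient by multiples of $q_0$. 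The main technical obstacle is exactly this adjustment: the shift must simultaneously satisfy $j\le k$, $l\le 2k$, and $2j\le l$, and the hypothesis $t\ge 2q_0+1$ is precisely what guarantees enough room in every residue class modulo $q_0$; the failure of the bound for smaller $t$ is already reflected in the extra coincidences of part (3).
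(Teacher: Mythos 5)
Your reduction rests on two claims that are not justified and, in the form you use them, are false. First, ``the image of $\rho_t$ has dimension equal to the number of distinct values $\alpha(a)$'': functions with distinct pole orders are independent, so this only gives $\dim\mathrm{Im}\,\rho_t\ \ge\ \#\{\text{distinct }\alpha(a)\}$; the reverse inequality is exactly the hard content (it amounts to knowing that every non-gap $\le t(q+2q_0+1)$ is already a monomial pole order, i.e.\ the surjectivity statements the paper proves via its monomial basis of $L(mP_\infty)$ and the multiplication maps). Second, and more seriously, a pole-order coincidence between two monomial restrictions does \emph{not} produce an element of $I_{X_n}$: already in degree $2$, $x_2^2$ and $x_1x_3$ have equal pole order but $y^2-xv=w\neq 0$ on the curve; the actual quadric is the trinomial $x_2^2+x_1x_3+x_4x_0$. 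In parts (1)--(2) this is repairable: the free inequality gives $\dim I_{X_n}(t)\le\binom{t+2}{4}$, and together with the inclusion $I_{Q_n}(t)\subseteq I_{X_n}(t)$ (once $Q_n$ has been exhibited explicitly, or obtained from $\dim L(2(q+2q_0+1)P_\infty)=14<15$) the characterization follows; note also that you must check the boundary case $t=q_0$, where the non-gaps $(q_0+1)q$ and $(q_0+1)q+q_0$ are realized only because $x_3^{q_0}$ and $x_4^{q_0}$ happen to have these pole orders (the analogue of the paper's step (C)). But in part (3) the flaw is fatal: none of your four binomials lies in $I_{X_n}(q_0+1)$ (for instance $v^{q_0+1}+yw^{q_0}=x^{q_0}w$ on $X_n$, and no scalar multiple makes the difference vanish), and your coincidence count only bounds $\dim I_{X_n}(q_0+1)$ from \emph{above}, whereas ``at least four'' requires a lower bound, i.e.\ an upper bound on $\dim\mathrm{Im}\,\rho_{q_0+1}$. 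The paper obtains this from the exact value $\dim L((q_0+1)(q+2q_0+1)P_\infty)=\binom{q_0+5}{4}-\binom{q_0+3}{4}-4$, computed by Riemann--Roch duality with $K-D\sim(2q_0-2-t)(q+2q_0+1)P_\infty$ (Proposition \ref{dimension}, case (D)); your proposal never performs such a computation, nor does it correctly exhibit four genuine relations (they exist, but as trinomials such as $x_1^{q_0+1}+x_0x_3^{q_0}+x_0^{q_0}x_2$, obtained from $x^{q_0+1}=v^{q_0}-y$, and their independence modulo $I_{Q_n}(q_0+1)$ would need a real argument, not inspection of binomial ``leading pairs'').

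Part (4) is essentially the paper's route in outline: your claim that no non-gap lies in $((k-1)(q+2q_0+1),kq)$ is correct and plays the role of the paper's Lemma \ref{steppp}, and surjectivity plus Riemann--Roch gives the formula as in Theorem \ref{ttt}; but the ``adjustment of the Euclidean quotient'' so that $j\le k$, $2j\le l\le 2k$ is precisely where the paper's explicit construction with $e_1,e_2,e_3$ and the bound $\alpha\ge 2q_0$ does the work, and you only assert it. Finally, observe that your (corrected) count in part (2) gives $\dim I_{X_n}(t)=\binom{t+2}{4}$, which is what the paper's own proof of Theorem \ref{ttt} produces ($h^0(\Pro^4,\Ol_{\Pro^4}(t))-\dim L(t(q+2q_0+1)P_\infty)$); the number $\binom{t+4}{4}-\binom{t+2}{4}$ appearing in the statement is the dimension of $H^0(X_n,\Ol_{X_n}(t))$ (equivalently of $H^0(Q_n,\Ol_{Q_n}(t))$), so keep the two quantities clearly separated in your write-up.
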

The theorem provides an interesting geometric characterization of the small-degree
hypersurfaces of $\Pro^4$ containing $X_n$. Moreover it is proved that such a
characterization cannot be extended to higher-degree hypersurfaces.
\begin{remark}
 Two linearly
independent hypersurfaces of $\Pro^4$ containing $X_n$ and not containing $Q_n$ 
appear in \cite{AMS}, page 4 (see also
Example \ref{deg2}).
\end{remark}

Section \ref{seccalcoli} and Section \ref{sec2} are 
dedicated to preliminary
results. In particular, in Section \ref{seccalcoli} we derive explicit formulas
for the dimension of any Riemann-Roch space of the form $L(t(q+2q_0+1))$, $t \in
\Z_{\ge 0}$. On the other hand, in Section \ref{sec2} we consider
some multiplication maps of geometric interest, and study their properties. The
computational results are interpreted from a geometric point of view in Section
\ref{geom}, leading to the main goal of the paper.

\begin{remark}
 The linear series $|(q+2q_0+1)\pi^{-1}(P_\infty)|$ here considered is 
of deep interest in the literature. Its properties can be
used
to characterize Suzuki curves in terms
of the genus and the number of rational points (see \cite{HKT}, Theorem
10.102).
\end{remark}

\section{Geometry on the Weierstrass semigroup} \label{sec1}

Given a Suzuki curve $S_n$ and an integer $m \ge 0$, we denote by $L(mP_\infty)$
the vector space of the rational functions on $S_n$ whose pole order at
$P_\infty$
is at most $m$, i.e., the Riemann-Roch space associated to
the
divisor $mP_\infty$
on $S_n$.
We recall that the Weierstrass semigroup $H(P_\infty)$ associated to
$P_\infty$ is precisely the set of non-gaps at $P_\infty$. In other words,
$H(P_\infty)$ is the set of all the $m \in \Z_{\ge 0}$ such that there exists a
rational function in $L(mP_\infty) \setminus L((m-1) P_\infty)$.

\begin{remark}\label{banale}
 Since, for any $m \ge 0$, we have $0 \le L((m+1)P_\infty)-L(mP_\infty) \le 1$,
by 
definition of Weierstrass semigroup we clearly get $\dim_{\F} L(mP_\infty) = 
|\{ s \in H(P_\infty) : s \le m \} |$.
\end{remark}

\begin{lemma}[\cite{MA}, Lemma 3.1]
 \label{weisem}Let $H(P_\infty)$ be the Weierstrass semigroup defined above.
Then
 $H(P_\infty)= \langle q, q+q_0, q+2q_0, q+2q_0+1
\rangle$.
\end{lemma}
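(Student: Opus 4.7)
The plan is the standard two-step approach for computing a Weierstrass semigroup at a rational point: exhibit enough functions realizing the claimed generators as pole orders, then match the gap count of the semigroup they generate with the genus.

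First, I would realize the generators $q$, $q+q_0$, $q+2q_0$, $q+2q_0+1$ as pole orders at the unique place $P$ of $C_n$ over $P_\infty$. The coordinates $x$ and $y$, viewed as rational functions on $C_n$, have $v_P(x)=-q$ and $v_P(y)=-(q+q_0)$: this can be checked by dehomogenizing in the chart $y \ne 0$, where the projective equation becomes
\[
v^{q_0}(1-v^{q-1}) \;=\; u^{q_0+1}(u^{q-1}-v^{q-1})
\]
in the local coordinates $u=x/y$, $v=z/y$, and where comparing valuations on both sides forces $v_P(v)=(2q_0+1)\,v_P(u)$ and $v_P(u)=q_0$. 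For the remaining two generators I would use the classical auxiliary functions (in the spirit of Hansen--Stichtenoth)
\[
z_1:=y^{2q_0}-x^{2q_0+1},\qquad w_1:=z_1^{2q_0}-x\,y^{2q_0},
\]
whose leading pole terms cancel thanks to the curve equation raised to suitable Frobenius powers, yielding $v_P(z_1)=-(q+2q_0)$ and $v_P(w_1)=-(q+2q_0+1)$. This establishes
\[
\Gamma:=\langle q,\,q+q_0,\,q+2q_0,\,q+2q_0+1\rangle\;\subseteq\; H(P_\infty).
\]

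Second, I would count the gaps of $\Gamma$ and verify that there are exactly $g_n=q_0(q-1)$ of them. Since $2q_0+1$ is odd and coprime to $q_0$, it is coprime to $q=2q_0^2$, so the four generators together hit every residue class modulo $q$ and $\Gamma$ is cofinite in $\Z_{\ge 0}$. I would compute the Apéry set $\mathrm{Ap}(\Gamma,q)=\{w_0,\ldots,w_{q-1}\}$ by finding, for each residue $j\pmod q$, the smallest nonnegative combination $b(q+q_0)+c(q+2q_0)+d(q+2q_0+1)$ lying in that class; Selmer's formula
\[
|\Z_{\ge 0}\setminus\Gamma| \;=\; \frac{1}{q}\sum_{i=0}^{q-1}w_i \;-\; \frac{q-1}{2}
\]
should then produce $q_0(q-1)$. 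Once this is verified, the Weierstrass Gap Theorem ensures $|\Z_{\ge 0}\setminus H(P_\infty)|=g_n$ as well, so the inclusion displayed above is an equality.

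The main obstacle is the Apéry-set bookkeeping: one must pin down canonical representatives $(b,c,d)$ for each residue modulo $q=2q_0^2$, verify minimality, and sum the resulting values cleanly. A cleaner alternative I would fall back on, if the direct computation becomes opaque, is to enumerate the $g_n$ gaps explicitly by partitioning them according to residue modulo $q_0$ and exploiting the observation that every sufficiently large integer lies in $\langle q,q+q_0\rangle$ already, so that only a finite initial window needs to be analyzed in detail.
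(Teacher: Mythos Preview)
The paper does not prove this lemma at all: it simply cites it from \cite{MA}, Lemma~3.1, so there is no ``paper's own proof'' to compare against. Your outline is precisely the standard argument (and essentially the one underlying \cite{HH} and \cite{MA}): exhibit functions with the four prescribed pole orders, then match the gap count with the genus. Your functions $z_1,w_1$ are, in characteristic~$2$, exactly the functions $v,w$ of Definition~\ref{defff}, so the inclusion $\Gamma\subseteq H(P_\infty)$ is fine.

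Two comments. First, the crux of the proof---the Ap\'ery-set computation showing $|\Z_{\ge 0}\setminus\Gamma|=q_0(q-1)$---is only announced, not carried out; you should be aware that this is where all the work lies, and your write-up as it stands is a plan, not a proof. Second, your fallback strategy contains an error: the claim that ``every sufficiently large integer lies in $\langle q,q+q_0\rangle$ already'' is false, since $\gcd(q,q+q_0)=q_0$ and hence $\langle q,q+q_0\rangle\subseteq q_0\Z$. You would need all four generators to reach every residue class, so the proposed shortcut does not simplify the bookkeeping in the way you suggest.
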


\begin{notation}\label{norma}
For any
$(a,b,c,d) \in \Z_{\ge 0}^4$, we set $$\lVert(a,b,c,d)\rVert:=
aq+b(q+q_0)+c(q+2q_0)+d(q+2q_0+1).$$
\end{notation}

\begin{notation}\label{norm}
The normalization  of $S_n$ will be always
denoted by $C_n$. It is well-known (see for instance \cite{fulton}, Section 7.5) that $C_n$ is a smooth abstract 
curve which
is birational to $S_n$. The normalization morphism $\pi:C_n \to S_n$ is here
injective.
Hence we will simply write $P_\infty$ instead of $\pi^{-1}(P_\infty)$.
\end{notation}

Here we focus on $C_n$ curves and study linear systems on them of the form $|mP_\infty|$, 
providing a characterization of the very ample ones.

\begin{lemma}\label{spanned}
 Let $m$ be a positive integer. The linear system
$|mP_\infty|$
is spanned by its global sections if and only if $m \in H(P_\infty)$.
\end{lemma}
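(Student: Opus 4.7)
The plan is to reduce base-point freeness of $|mP_\infty|$ to a purely numerical question about $H(P_\infty)$, which is answered by Remark \ref{banale}.

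First I would observe that $|mP_\infty|$ is automatically non-empty for $m \ge 0$, since $L(mP_\infty) \supseteq \F$ and so it contains the distinguished divisor $mP_\infty$ itself. Any base point of $|mP_\infty|$ must therefore be contained in the support of $mP_\infty$, and hence must equal $P_\infty$. This single-candidate observation is really the crux that trivialises the ``only possible base locus'' analysis.

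Next I would characterise when $P_\infty$ is actually a base point. For $f \in L(mP_\infty) \setminus \{0\}$, the associated effective divisor $D_f := (f) + mP_\infty$ satisfies
\[
\mathrm{ord}_{P_\infty}(D_f) = m + \mathrm{ord}_{P_\infty}(f) \in \{0, 1, \ldots, m\},
\]
and $P_\infty \not\in \mathrm{Supp}(D_f)$ precisely when $f$ attains pole order exactly $m$ at $P_\infty$. Consequently, $P_\infty$ is a base point of $|mP_\infty|$ if and only if no non-zero element of $L(mP_\infty)$ has pole order exactly $m$ at $P_\infty$, i.e.\ if and only if $L(mP_\infty) = L((m-1)P_\infty)$.

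Finally, by Remark \ref{banale} the equality $\dim_\F L(mP_\infty) = \dim_\F L((m-1)P_\infty)$ holds if and only if $m$ is a gap at $P_\infty$, that is, $m \notin H(P_\infty)$. Combining this with the previous step, $|mP_\infty|$ is spanned by its global sections (equivalently, base-point free) if and only if $m \in H(P_\infty)$, as required. No serious obstacle arises; the argument is essentially a careful bookkeeping of orders of poles at the single candidate base point, and the main content is really imported from Lemma \ref{weisem} and Remark \ref{banale}.
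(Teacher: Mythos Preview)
Your argument is correct and is precisely the standard elementary justification of this well-known fact. The paper does not actually give a proof: it simply remarks that this is a well-known property of one-point Weierstrass semigroups on a smooth curve. Your proposal thus fills in exactly what the paper leaves implicit, via the same underlying idea (the only possible base point is $P_\infty$, and it is a base point iff $m$ is a gap), so there is no genuine methodological difference to compare.
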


\begin{proof}
This is a well-known property of the one-point
Weierstrass
semigroup $H(P_\infty)$ (notice that $C_n$ is smooth).
\end{proof}

\begin{proposition}\label{verya}
 Let $m$ be a positive integer. The linear system $|mP_\infty|$ is
very ample if and only if $m \in H(P_\infty)$ and $m-1 \in H(P_\infty)$.
\end{proposition}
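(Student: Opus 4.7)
The plan is to apply the standard numerical criterion for very ampleness on a smooth projective curve: $|mP_\infty|$ is very ample if and only if $h^0(mP_\infty-E)=h^0(mP_\infty)-2$ for every effective divisor $E$ on $C_n$ of degree two.

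For the ``only if'' direction I would apply this criterion with $E=2P_\infty$, obtaining $h^0((m-2)P_\infty)=h^0(mP_\infty)-2$. Since each successive difference $h^0(kP_\infty)-h^0((k-1)P_\infty)$ is $0$ or $1$, a total drop of $2$ over two steps forces both individual differences to equal $1$, which by Remark \ref{banale} is precisely the assertion that $m,m-1\in H(P_\infty)$.

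For the ``if'' direction, assume $m,m-1\in H(P_\infty)$. Lemma \ref{weisem} implies that the smallest such $m$ is $q+2q_0+1$, so in particular $m>q+q_0$ and therefore $1,x,y\in L(mP_\infty)$. Lemma \ref{spanned} ensures that both $|mP_\infty|$ and $|(m-1)P_\infty|$ are base-point free. I would then verify the criterion by distinguishing cases according to the support of $E$. When $E=2P_\infty$ the required identity is exactly the hypothesis. When $E=P_\infty+Q$ with $Q\neq P_\infty$, base-point freeness of $|(m-1)P_\infty|$ at $Q$ gives $h^0((m-1)P_\infty-Q)=h^0((m-1)P_\infty)-1$, while $m\in H(P_\infty)$ gives $h^0((m-1)P_\infty)=h^0(mP_\infty)-1$, and these combine to the required equality.

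The remaining case, where $E$ is supported in $C_n\setminus\{P_\infty\}$, is the main obstacle since it does not reduce to a semigroup statement. To handle it I would use the three-dimensional subspace $V:=\langle 1,x,y\rangle\subseteq L(mP_\infty)$. The sub-series of $|mP_\infty|$ spanned by $V$ has fixed part $(m-q-q_0)P_\infty$, and its moving part induces the normalization $\pi:C_n\to S_n\subseteq\Pro^2$. Since $S_n$ is smooth away from $P_\infty$ and $\pi$ is injective, $\pi$ restricts to an isomorphism $C_n\setminus\{P_\infty\}\to S_n\setminus\{P_\infty\}$. Consequently the evaluation $V\to H^0(\Ol_E)$ is already surjective for any such $E$, hence so is the larger map $H^0(\Ol(mP_\infty))\to H^0(\Ol_E)$, which is exactly the sought-after identity $h^0(mP_\infty-E)=h^0(mP_\infty)-2$.
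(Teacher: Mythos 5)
Your proof is correct and follows essentially the same route as the paper: the case $E=2P_\infty$ is exactly the semigroup hypothesis (this is also how the paper's ``only if'' direction works, via the differential at $P_\infty$), and length-two divisors supported off $P_\infty$ are handled through the sub-series $\langle 1,x,y\rangle\subseteq L(mP_\infty)$ inducing the plane model, which is an embedding away from $P_\infty$ --- precisely the paper's argument. The only difference is one of packaging: you verify the numerical criterion with an explicit three-case split, treating the mixed case $E=P_\infty+Q$ via base-point-freeness of $|(m-1)P_\infty|$, whereas the paper absorbs that case into the injectivity of the induced plane map (using that $P_\infty$ is the only point of $S_n$ on the line at infinity) and is left with only the tangent condition at $P_\infty$.
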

\begin{proof}
If $|mP_\infty|$ is
very ample, then it is obviously spanned by its global sections. Hence, by Lemma
\ref{spanned}, we get $m \in H(P_\infty)$. Let $r:=\dim_{\F} L(mP_\infty)$ and
denote
by $\varphi_m:C_n \to \Pro^{r-1}$ the morphism induced by $mP_\infty$. The
linear system
$|mP_\infty|$ is very ample if and only if $\varphi_m$ is injective with
non-zero
differential at any point of $C_n$.

\begin{enumerate}
 \item[($\Rightarrow$)]
Assume that the linear
system $|mP_\infty|$ is very ample. In particular, $\varphi_m$ must have
non-zero
differential at $P_\infty$. This implies the existence of a rational function $f
\in L(mP_\infty)$
whose vanishing order at $P_\infty$ is exactly one. Since $m \in H(P_\infty)$,
this implies
$m-1 \in H(P_\infty)$. 

\item[($\Leftarrow$)]
On the other hand, assume $m,m-1 \in H(P_\infty)$. We clearly have
$m \ge q+2q_0+1$. 
As in Notation \ref{norm}, let $\pi:C_n \to S_n$
denote the normalization morphism of $S_n$.
Since  
$(x)_\infty=qP_\infty$ and
$(y)_\infty=(q+q_0)P_\infty$ (see \cite{HH}, Proposition 1.3), we have $\{
1,x,y\} 
\subseteq L(mP_\infty)$. 
 Hence the linear system 
$|mP_\infty|$ contains
the linear system spanned by $\{ 1,x,y\}$, which induces the 
composition
of $\pi$ with the inclusion $S_n \hookrightarrow \Pro^2$.
Since $P_\infty$ is the only singular point of
$S_n$, the morphism $\varphi_m$ is injective with non-zero differential at any
point of
$C_n \setminus \{P_\infty\}$. Therefore, in order to prove that 
$|mP_\infty|$ is very ample, it is necessary and 
sufficient 
to show that
$\dim_{\F} L((m-2)P_\infty)= \dim_{\F} L(mP_\infty)-2$.
Since $m,m-1 \in H(P_\infty)$, this condition is clearly satisfied.
\end{enumerate}
\end{proof}

\begin{remark}
 Proposition \ref{verya} shows that the smallest projective space in which $C_n$
can
be embedded by a one-point linear system $|mP_\infty|$ is $\Pro^4$. 
\end{remark}

\section{Riemann-Roch spaces of Suzuki curves} \label{seccalcoli}
In this section we provide an explicit formula for the dimension of any 
Riemann-Roch space of the form
$L(t(q+2q_0+1)P_\infty)$, $t \in \Z_{\ge 0}$.
Since the Weierstrass semigroup $H(P_\infty)$ is known (Proposition
\ref{weisem}), the dimension of $L(mP_\infty)$ is also known, \textit{in
principle}, for any $m \ge 0$. On the other hand, deriving easy-handable
expressions from the semigroup's data is not completely trivial.
Explicit formulas and their combination are key-points in the proofs of this
paper. The main results of the Section are Proposition \ref{dimension} and
Proposition \ref{base}, whose proofs are splitted in some preliminary lemmas.

\begin{lemma}\label{lem1}
 Let $(a,b,c,d) \in \Z_{\ge 0}^4$ and let $t \le q_0-1$ be a positive integer.
The following two facts are equivalent:
\begin{enumerate}
 \item[(A)] $\lVert (a,b,c,d) \rVert \le t(q+2q_0+1)$,
 \item[(B)] $a+b+c+d \le t$.
\end{enumerate}
\end{lemma}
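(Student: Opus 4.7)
The plan is to handle the two implications of the \textbf{iff} statement separately, and in fact both reduce to simple numerical comparisons.

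For the easy direction (B)~$\Rightarrow$~(A), I would observe that each of the four generators $q$, $q+q_0$, $q+2q_0$, $q+2q_0+1$ of $H(P_\infty)$ is bounded above by $q+2q_0+1$. Therefore
\[
\lVert(a,b,c,d)\rVert \;\le\; (a+b+c+d)(q+2q_0+1) \;\le\; t(q+2q_0+1),
\]
which is (A).

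For the reverse direction (A)~$\Rightarrow$~(B), I would argue by contrapositive: assume $a+b+c+d \ge t+1$ and derive $\lVert(a,b,c,d)\rVert > t(q+2q_0+1)$. The crucial observation is that $q$ is the \emph{smallest} of the four semigroup generators, so
\[
\lVert(a,b,c,d)\rVert \;\ge\; (a+b+c+d)\,q \;\ge\; (t+1)\,q.
\]
It therefore suffices to verify the purely numerical inequality $(t+1)q > t(q+2q_0+1)$, i.e.\ $q > t(2q_0+1)$. This is where the hypothesis $t \le q_0-1$ is used: since $q = 2q_0^2$,
\[
t(2q_0+1) \;\le\; (q_0-1)(2q_0+1) \;=\; 2q_0^2 - q_0 - 1 \;<\; 2q_0^2 \;=\; q,
\]
and we are done.

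There is no real obstacle here; the lemma is essentially a bookkeeping statement about how the four generators of $H(P_\infty)$ compare to the largest one, $q+2q_0+1$. The only place where one has to be careful is to extract the sharp range $t \le q_0 - 1$, which appears naturally from the inequality $(q_0-1)(2q_0+1) < 2q_0^2$ and indicates where the equivalence will start to fail (namely at $t = q_0$, which is exactly the threshold appearing in the main theorem).
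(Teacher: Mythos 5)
Your proposal is correct and follows essentially the same route as the paper: the forward bound via the largest generator $q+2q_0+1$, and the contrapositive via the smallest generator $q$ together with the numerical inequality $(t+1)q>t(q+2q_0+1)$, which reduces to $(q_0-1)(2q_0+1)=2q_0^2-q_0-1<q$ exactly as in the paper's computation. No issues.
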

\begin{proof}
 Assume $a+b+c+d \le t$. Then $\lVert (a,b,c,d) \rVert \le (a+b+c+d)(q+2q_0+1)
\le t(q+2q_0+1)$. On the other hand, we may note that
$q>q-q_0-1=2(q_0-1)q_0+q_0-1 \ge 2tq_0+t$. Hence $(t+1)q>t(q+2q_0+1)$. If
$a+b+c+d >t$ then $a+b+c+d \ge t+1$. As a consequence, we get $\lVert (a,b,c,d) \rVert \ge
(a+b+c+d)q \ge (t+1)q> t(q+2q_0+1)$.
\end{proof}

\begin{lemma}\label{uniq}
 Let $(a',b',c',d') \in \Z_{\ge 0}^4$. Choose any integer $t$ with $1 \le t \le
q_0-1$  and assume
 $\lVert (a',b',c',d')\rVert \le t(q+2q_0+1)$. There
exists a unique 4-tuple $(a,b,c,d) \in \Z_{\ge 0}^4$ with $b \in \{ 0,1 \}$ and
$\lVert
(a,b,c,d)\rVert =\lVert (a',b',c',d')\rVert$.
\end{lemma}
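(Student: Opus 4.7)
My plan is to split the statement into existence and uniqueness, and to handle both by exploiting the single ``syzygy'' $2(q+q_0)=q+(q+2q_0)$ among the four generators, combined with the size bound supplied by Lemma \ref{lem1}.

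For existence, I would notice that this identity amounts to a norm-preserving move $(a,b,c,d)\mapsto(a+1,\,b-2,\,c+1,\,d)$, decreasing $b$ by $2$ at each step while keeping all entries in $\Z_{\ge 0}$. Applying it $\lfloor b'/2\rfloor$ times to $(a',b',c',d')$ produces a tuple with the same norm and with $b\in\{0,1\}$, giving existence.

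For uniqueness, I would subtract two hypothetical candidates to obtain an integer vector $(\alpha,\beta,\gamma,\delta)$ with $\lVert\alpha,\beta,\gamma,\delta\rVert_{\text{signed}}=0$. Expanding the norm as $(a+b+c+d)q+(b+2c+2d)q_0+d$, the equality reads
\[(\alpha+\beta+\gamma+\delta)\,q+(\beta+2\gamma+2\delta)\,q_0+\delta=0.\]
The strategy is to peel off the coordinates one modulus at a time. Lemma \ref{lem1} gives $|\alpha|,|\gamma|,|\delta|\le q_0-1$ and the assumption $b\in\{0,1\}$ gives $|\beta|\le 1$. Reducing the displayed relation modulo $q_0$ forces $\delta\equiv 0\pmod{q_0}$, hence $\delta=0$ by the bound. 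Using $q=2q_0^2$ and dividing the remaining equation by $q_0$ gives $2q_0(\alpha+\beta+\gamma)+(\beta+2\gamma)=0$, so $\beta+2\gamma$ is a multiple of $2q_0$; but $|\beta+2\gamma|\le 1+2(q_0-1)<2q_0$ forces $\beta+2\gamma=0$ and then $\alpha+\beta+\gamma=0$. Since $\beta=-2\gamma$ is constrained by $|\beta|\le 1$, we conclude $\gamma=0$, hence $\beta=0$ and finally $\alpha=0$.

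The main obstacle is making the numerics line up in the uniqueness step: one needs both the restriction $b\in\{0,1\}$ (yielding $|\beta|\le 1$) and the restriction $a+b+c+d\le t\le q_0-1$ (yielding $|\gamma|\le q_0-1$) in order to squeeze $|\beta+2\gamma|$ strictly below $2q_0$. This is exactly the sharp inequality that kills the syzygy used for existence, and it is also why the hypothesis $t\le q_0-1$ cannot be relaxed by this method.
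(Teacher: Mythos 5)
Your proposal is correct and follows essentially the same route as the paper: existence via the single relation $2(q+q_0)=q+(q+2q_0)$ (the paper writes $b'=2\beta+B$ and shifts $\beta$ into the $a$ and $c$ slots, which is your move iterated), and uniqueness by bounding the coordinates through Lemma \ref{lem1} and peeling them off by reduction modulo $q_0$, $2q_0$ and $q$. The only cosmetic difference is that you handle the $b$ and $c$ coordinates in one combined congruence $\beta+2\gamma\equiv 0 \pmod{2q_0}$ where the paper treats them in two successive reductions; the numerics are identical.
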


\begin{proof}
 First of all, we prove the existence. Write $b'=2\beta+B$, with $\beta \ge 0$
and $r \in \{0,1\}$, and set $a:=a'+\beta$, $b:=B$, $c:=c'+\beta$, $d:=d'$.
Now we prove the uniqueness. Assume that there exist
$(a_1,b_1,c_1,d_1), (a_2,b_2,c_2,d_2) \in \Z_{\ge 0}^4$ such that:
\begin{itemize}
 \item[(A)] $b_1,b_2 \in \{ 0,1 \}$,
 \item[(B)] $\lVert (a_1,b_1,c_1,d_1)\rVert = \lVert (a_2,b_2,c_2,d_2)\rVert$,
 \item[(C)] $\lVert (a_1,b_1,c_1,d_1)\rVert , \lVert (a_2,b_2,c_2,d_2)\rVert
\le t(q+2q_0+1)$.
\end{itemize}
 As in the proof of Lemma \ref{lem1}, we have $(t+1)q>t(q+2q_0+1)$. 
Condition (iii) implies, in particular, $c_1,c_2,d_1,d_2 \le
 t \le q_0-1$. Condition (ii) is equivalent to 
\begin{equation}\label{original}
  (a_1-a_2)q+(b_1-b_2)(q+q_0)+(c_1-c_2)(q+2q_0)+(d_1-d_2)(q+2q_0+1)=0. 
\end{equation}
Reducing modulo $q_0$, we have $d_1-d_2 \equiv 0 \mod q_0$. Since $-q_0+1
 \le d_1,d_2 \le q_0-1$ we deduce $d_1=d_2$. Hence equation (\ref{original})
becomes
\begin{equation}\label{original2}
 (a_1-a_2)q+(b_1-b_2)(q+q_0)+(c_1-c_2)(q+2q_0)=0.
\end{equation}
Reducing modulo $2q_0$, we obtain $(b_1-b_2)q_0 \equiv 0 \mod 2q_0$. Since
$b_1,b_2 \in \{ 0,1 \}$, one gets $b_1=b_2$.
By substitution into equation (\ref{original2}), we may write
\begin{equation}\label{original3}
 (a_1-a_2)q+(c_1-c_2)(q+2q_0)=0.
\end{equation}
Reducing modulo $q$, we get $(c_1-c_2)2q_0 \equiv 0 \mod q$. Since $q=2q_0^2$
and $c_1,c_2 \le q_0-1$, we conclude $c_1=c_2$. Clearly $a_1=a_2$ at this
point. 
\end{proof}

\begin{remark}
 The uniqueness argument in the proof of the previous lemma is applied 
also in \cite{AMS},
Proposition
3.7, to get an analogous result.
\end{remark}

The following lemma summarizes some trivial enumeration facts we are going to
apply. 
A proof can be easily obtained by induction.
\begin{lemma}\label{comb}
Let $h$ be a positive integer. The following formulas hold.
\begin{enumerate}
 \item[(A)] $\sum_{i=0}^h i =h(h+1)/2$.
\item[(B)]  $\sum_{i=0}^h i^2 =h^3/3+h^2/2+h/6$.
\item[(C)] Let $\mathcal{T}_h$ be the set of all the 3-tuple $(a,b,c) \in
\Z_{\ge 0}^3$ 
satisfying $a+b+c=h$. We have $|\mathcal{T}_h|=(h+1)(h+2)/2$.
\end{enumerate}
\end{lemma}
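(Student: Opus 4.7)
The statement collects three elementary enumerative identities, so the plan is simply to verify each by standard methods; no conceptual obstacle is expected.

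For part (A) I would induct on $h$. The base case $h=1$ gives $0+1 = 1 = 1\cdot 2/2$. For the inductive step, add $h+1$ to both sides of the formula for $\sum_{i=0}^h i$ and check that $h(h+1)/2 + (h+1) = (h+1)(h+2)/2$, which is immediate from factoring out $(h+1)$.

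For part (B), the same induction on $h$ works. The base case is trivial, and for the inductive step one adds $(h+1)^2$ to $h^3/3 + h^2/2 + h/6$ and rearranges to match $(h+1)^3/3 + (h+1)^2/2 + (h+1)/6$; this is a direct polynomial identity obtained by expanding both sides.

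For part (C), rather than a fresh induction, I would reduce to (A). Partition $\mathcal{T}_h$ according to the value of $c$: for fixed $c \in \{0,1,\dots,h\}$, the pairs $(a,b) \in \Z_{\ge 0}^2$ with $a+b = h-c$ number exactly $h-c+1$ (choose $a \in \{0,\dots,h-c\}$ and set $b := h-c-a$). Summing gives
\[
|\mathcal{T}_h| \;=\; \sum_{c=0}^{h}(h-c+1) \;=\; \sum_{k=1}^{h+1} k \;=\; \frac{(h+1)(h+2)}{2},
\]
where the last equality follows from (A) applied to $h+1$. There is no genuinely hard step here; the only thing to be careful about is indexing the summation in (C) so that the reduction to (A) is transparent.
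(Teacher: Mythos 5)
Your proof is correct and is essentially the argument the paper intends: the paper gives no details, merely asserting that ``a proof can be easily obtained by induction,'' and your inductions for (A) and (B) together with the reduction of (C) to (A) via partitioning on $c$ are the standard way to fill that in. No gaps.
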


\begin{proposition} \label{dimension}
 Let $t$ be a non-negative integer and let $g_n=q_0(q-1)$ be the genus
of the Suzuki curve $S_n$ (see the Introduction). The dimension of the one-point Riemann-Roch space
$L(t(q+2q_0+1)P_\infty)$ is given by the following formulas:

$$\dim_{\F} L(t(q+2q_0+1)P_\infty)= \left\{  
 \begin{array}{ll}
  4t+1 & \mbox{ if $t=0$ or $t=1$,} \\
    \binom{t+4}{4}-\binom{t+2}{4} & \mbox{ if $2 \le t \le q_0-1$,} \\
  t(q+2q_0+1)+1-g_n+\binom{2q_0-t+2}{4}-\binom{2q_0-t}{4} & \mbox{ if $q_0 \le t
\le 2q_0-4$,} \\
  t(q+2q_0+1)+6-g_n & \mbox{ if $t=2q_0-3$,} \\
  t(q+2q_0+1)+2-g_n & \mbox{ if $t=2q_0-2$,} \\
  t(q+2q_0+1)+1-g_n & \mbox{ if $t \ge 2q_0-1$.}
 \end{array}\right.\ $$
\end{proposition}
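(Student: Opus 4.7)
Setting $D := (q+2q_0+1)P_\infty$, the plan is to translate the problem into counting non-gaps of $H(P_\infty)$ via Remark \ref{banale}, which gives $\dim_{\F} L(mP_\infty) = |H(P_\infty) \cap [0,m]|$, and to use the generators from Lemma \ref{weisem}. The cases $t=0,1$ follow by direct enumeration: the only non-gaps up to $q+2q_0+1$ are $0,\, q,\, q+q_0,\, q+2q_0,\, q+2q_0+1$, giving $4t+1$ in both cases.

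For $2 \le t \le q_0 - 1$, I would combine Lemma \ref{lem1} (which converts $\lVert(a,b,c,d)\rVert \le t(q+2q_0+1)$ into $a+b+c+d \le t$) with Lemma \ref{uniq} (which selects a unique representative with $b \in \{0,1\}$). Splitting the count according to the value of $b$ and summing triangular numbers via Lemma \ref{comb}(C) and a hockey-stick identity produces
\[
\dim_{\F} L(tD) \;=\; \binom{t+3}{3} + \binom{t+2}{3},
\]
which two applications of Pascal's rule rewrite as $\binom{t+4}{4} - \binom{t+2}{4}$.

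For $t \ge 2q_0 - 1$, the plan is direct Riemann--Roch: a short check using $q = 2q_0^2$ shows $t(q+2q_0+1) \ge 2g_n - 1$, hence $\dim_{\F} L(tD) = t(q+2q_0+1) + 1 - g_n$. The main obstacle is the intermediate range $q_0 \le t \le 2q_0 - 2$, where the index of speciality $h^1(tD)$ is nonzero. My plan here is to invoke Serre duality $h^1(tD) = h^0(K - tD)$ together with the standard fact for Suzuki curves that $K \sim (2g_n - 2)P_\infty$, equivalently the symmetry of $H(P_\infty)$. The arithmetic identity $2g_n - 2 - t(q+2q_0+1) = (2q_0 - t - 2)(q+2q_0+1)$, a short computation using $q = 2q_0^2$, reduces $h^1(tD)$ to $\dim_{\F} L\bigl((2q_0 - t - 2)D\bigr)$, which the cases already handled compute: for $q_0 \le t \le 2q_0 - 4$ one has $2q_0 - t - 2 \in \{2,\dots,q_0 - 2\}$ and the small-$t$ formula yields $\binom{2q_0 - t + 2}{4} - \binom{2q_0 - t}{4}$, while for $t = 2q_0 - 3, 2q_0 - 2$ the parameter drops to $1, 0$ and the initial enumeration gives $h^1 = 5, 1$, producing the constants $6$ and $2$ in the statement. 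The delicate point is pinning down $K \sim (2g_n - 2)P_\infty$; should one prefer a more self-contained route, the intermediate formula can alternatively be obtained by a direct combinatorial count of non-gaps of $H(P_\infty)$ in the interval $(t(q+2q_0+1), 2g_n - 1]$, at the price of adapting Lemmas \ref{lem1}--\ref{uniq} to a shifted range.
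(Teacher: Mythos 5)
Your proposal is correct and follows essentially the same route as the paper: direct enumeration of non-gaps via Lemmas \ref{lem1}, \ref{uniq} and \ref{comb} for $0\le t\le q_0-1$, and Riemann--Roch with Serre duality through $K\sim (2g_n-2)P_\infty \sim 2(q_0-1)(q+2q_0+1)P_\infty$ to reduce the ranges $q_0\le t\le 2q_0-2$ and $t=2q_0-3$ back to the already-computed small cases, with $t\ge 2q_0-1$ nonspecial. The only differences are cosmetic (your $\binom{t+3}{3}+\binom{t+2}{3}$ bookkeeping and the slightly different grouping of the boundary cases $t=2q_0-2,2q_0-3$).
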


\begin{proof}
We recall (Remark \ref{banale}) that $\dim_{\F}L(t(q+2q_0+1))$ is exactly the
cardinality
of the set $H_t(P_\infty):=\{ s \in
H(P_\infty) : s \le t(q+2q_0+1)\}$. 
The proof is divided into five steps.
\begin{enumerate}
\item[(A)] If $t=0,1$ the dimension is easily computed by hands (Lemma \ref{weisem}).
\item[(B)] Assume $2 \le t \le q_0-1$.  Combining Lemma \ref{lem1} and Lemma
\ref{uniq} we see that, for any $t \in \{2,...,q_0-1 \}$, the cardinality of
$H_t(P_\infty)$ may be computed as
\begin{equation*}|H_t(P_\infty)|= |\{ (a,b,c,d) \in \Z_{\ge 0}^4 : b\in \{ 0,1\} \mbox{ and }
a+b+c+d\le t \} |.\end{equation*}
Hence, following the notation of Lemma \ref{comb}, we write
\begin{eqnarray*}
 |H_t(P_\infty)| &=& \sum_{h=0}^t \ |\{ (a,b,c,d) \in \Z_{\ge 0}^4 : b\in \{
0,1\} \mbox{ and }
a+b+c+d= h \} | \\
&=&\sum_{h=0}^t |\mathcal{T}_h| + \sum_{h=1}^t |\mathcal{T}_{h-1}| \\
&=& |\mathcal{T}_t|+2\sum_{h=0}^{t-1} |\mathcal{T}_h| \\
&=& (t+1)(t+2)/2 + \sum_{h=0}^{t-1} h^2+3h+2 \\
&=& (2t^3+9t^2+13t+6)/6 \\
&=& \binom{t+4}{4}-\binom{t+2}{4},
\end{eqnarray*}
which is the expected formula.
\item[(C)] Since  the
genus of $S_n$ is $g_n=q_0(q-1)$, we compute 
 $2g_n-2=2(q_0-1)(q+2q_0+1)$. Hence, for $t\ge 2q_0-2$, 
the dimension of $L(t(q+2q_0+1))$ is
given by a trivial application of the Riemann-Roch Theorem and the fact that
$\dim_{\F} L(0)= 1$.
\item[(D)] Here we assume $q_0\le t \le 2q_0-4$ and set
$D_t:=t(q+2q_0+1)P_\infty$. A canonical divisor
on $S_n$ is $K=(2g_n-2)P_\infty \sim 2(q_0-1)(q+2q_0+1)P_\infty$. See also
\cite{AMS} for details.
We have a linear equivalence of divisors
$$K-D_t \sim (2q_0-2-t)(q+2q_0+1)P_\infty.$$
Since $2 \le 2q_0-2-t\le q_0-1$, thanks to step (B) we are
able to explicitly compute $\dim_{\F} L(K-D_t)$ and obtain $\dim_{\F} L(D_t)$
by
applying the Riemann Roch Theorem as follows:
$$\dim_{\F} L(D_t)=t(q+2q_0+1)+1-g_n+\binom{2q_0-t+2}{4}-\binom{2q_0-t}{4}.$$
\item[(E)] Finally, assume $t=2q_0-3$ and set $D:=(2q_0-3)(q+2q_0+1)P_\infty$. We have a linear
equivalence
$K-D \sim (q+2q_0+1)P_\infty$ and so, by step (A), the dimension of $L(D)$ is
again computed 
by the Riemann-Roch Theorem. \qedhere
\end{enumerate}
\end{proof}

We conclude this section  providing an explicit
monomial basis of any Riemann-Roch space
$L(mP_\infty)$, $m \ge 0$. The following preliminary result generalizes Lemma
\ref{uniq}.

\begin{lemma}\label{uniq2}
 Let $(a',b',c',d') \in \Z_{\ge 0}^4$.  There
exists a unique $(a,b,c,d) \in \Z_{\ge 0}^4$ which satisfies the following
properties: 
$$0 \le b \le 1, \ \ \ 0 \le c \le q_0-1, \ \ \ 0 \le d \le q_0-1, \ \ \ \lVert
(a,b,c,d)\rVert =\lVert (a',b',c',d')\rVert.$$
\end{lemma}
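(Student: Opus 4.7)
The statement is a bound-free strengthening of Lemma~\ref{uniq}: the upper bound $\lVert(a',b',c',d')\rVert \le t(q+2q_0+1)$ has been dropped and replaced by two additional normalisation conditions $c \le q_0-1$ and $d \le q_0-1$. I would therefore split the proof into \emph{existence} and \emph{uniqueness}.

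For existence, the plan is to produce three ``reduction moves'' coming from three identities among the semigroup generators $q$, $q+q_0$, $q+2q_0$, $q+2q_0+1$, and then to apply them in a carefully chosen order. The identities are
\begin{itemize}
\item[(R1)] $q_0(q+2q_0+1) = q_0\, q + (q+q_0)$,
\item[(R2)] $2(q+q_0) = q + (q+2q_0)$,
\item[(R3)] $q_0(q+2q_0) = (q_0+1)\,q$,
\end{itemize}
each of which is verified by a one-line computation using $q=2q_0^2$. Starting from $(a',b',c',d')$, first apply (R1) repeatedly to reduce the last coordinate modulo $q_0$: this modifies only $a'$ and $b'$, not $c'$. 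Next apply (R2) repeatedly to reduce the second coordinate modulo $2$: this modifies only $a$ and $c$, not the last coordinate. Finally apply (R3) repeatedly to reduce the third coordinate modulo $q_0$: this modifies only the first coordinate. Because of the order chosen, each step preserves the previously enforced constraints, and after the three passes we obtain a 4-tuple $(a,b,c,d)$ with the required shape and the same $\lVert\cdot\rVert$. The main subtlety here is precisely the bookkeeping that the three reductions do not ``undo'' each other, which is guaranteed by this ordering.

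For uniqueness, I would rerun the modular argument in the proof of Lemma~\ref{uniq}, this time with the bounds $c_1,c_2,d_1,d_2 \in \{0,\dots,q_0-1\}$ and $b_1,b_2 \in \{0,1\}$ coming directly from the statement rather than from the inequality $\lVert\cdot\rVert \le t(q+2q_0+1)$. Assuming two 4-tuples $(a_1,b_1,c_1,d_1)$ and $(a_2,b_2,c_2,d_2)$ satisfy the conditions and have equal norm, one writes
\[
(a_1-a_2)q+(b_1-b_2)(q+q_0)+(c_1-c_2)(q+2q_0)+(d_1-d_2)(q+2q_0+1)=0,
\]
reduces modulo $q_0$ to force $d_1\equiv d_2 \pmod{q_0}$ and hence $d_1=d_2$, then reduces modulo $2q_0$ to force $b_1=b_2$, and finally reduces modulo $q=2q_0^2$ to force $c_1=c_2$, whence $a_1=a_2$. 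This is verbatim the argument of Lemma~\ref{uniq}, so the novelty is only in the initial constraints that justify each passage from a congruence to an equality.

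The main obstacle is the existence part: one has to identify the three identities (R1)--(R3) and apply them in an order such that each move respects the normalisations produced by the previous ones. Everything else (the modular argument, the explicit verification of (R1)--(R3)) is routine and essentially identical to Lemma~\ref{uniq}.
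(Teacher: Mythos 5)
Your proposal is correct and follows essentially the same route as the paper: the three identities (R1)--(R3) are exactly the reductions the paper performs (writing $d'=\delta q_0+D$, then $b_1=2\beta+B$, then $c_2=\gamma q_0+C$), applied in the same order $d \to b \to c$ so that later moves do not disturb earlier normalisations, and the uniqueness is the same modular argument as in Lemma~\ref{uniq}, which indeed only needs the bounds on $b$, $c$, $d$.
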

\begin{proof}
To prove the uniqueness we may apply the same argument as Lemma \ref{uniq},
which uses
only our hypothesis on $b$, $c$ and $d$.
Let us prove the existence. Write $d'=\delta q_0+D$ with $0 \le D \le q_0-1$ and
set
$(a_1,b_1,c_1,d_1):=(a'+\delta q_0, b'+\delta, c',D)$.
Write $b_1=2\beta+B$ with $0 \le B \le 1$, and set
$(a_2,b_2,c_2,d_2):=(a_1+\beta, B, c_1+\beta,D)$.
Write $c_2=\gamma q_0+C$ with $0 \le C \le q_0-1$, and define
$$(a,b,c,d):=(a_2+\gamma q_0+\gamma, b_2, C, d_2)=(a'+\delta q_0 + \gamma q_0 +
\beta + \gamma,
B,C,D).$$
It is easily checked that $(a,b,c,d)$ has the expected properties.
\end{proof}

\begin{definition}\label{defff}
 Following \cite{HH} and \cite{MA}, we define the rational functions
$v:=y^{2q_0}+x^{2q_0+1}$ and $w:= y^{2q_0}x+v^{2q_0}$. The pole divisors
of $x,y,v,w$ are computed in \cite{HH}, Proposition 1.3:
$$(x)_\infty=qP_\infty, \ \ \ (y)_\infty=(q+q_0)P_\infty, \ \ \ 
(v)_\infty= (q+2q_0)P_\infty, \ \ \ (w)_\infty=(q+2q_0+1)P_\infty.$$
\end{definition}

\begin{remark} \label{ordinenorma}
 From the pole divisors given in the previous definition we seet that,
for any $(a,b,c,d) \in \Z_{\ge 0}^4$, the pole order of $x^ay^bv^cw^d$ at
$P_\infty$ is exactly $\lVert (a,b,c,d) \rVert$. 
\end{remark}

\begin{proposition} \label{base}
 Let $m \ge 0$ be an integer. A basis of the Riemann-Roch space
$L(mP_\infty)$ is given by all the rational functions $x^ay^bv^cw^d$
such that:
$$a,b,c,d \in \Z_{\ge 0}, \ \ \ 0 \le b \le 1, \ \ \ 0 \le c,d \le q_0-1, \ \ \
\lVert (a,b,c,d) \rVert \le m.$$
\end{proposition}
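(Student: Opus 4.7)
The plan is to combine three already-established ingredients: the description of the pole orders of monomials in $x,y,v,w$ (Remark \ref{ordinenorma}), the uniqueness/existence of a canonical representation of semigroup elements (Lemma \ref{uniq2}), and the dimension count via the Weierstrass semigroup (Remark \ref{banale}). No serious new computation seems to be required; the task is really to check that the monomials are in $L(mP_\infty)$, are linearly independent, and are of the right number to form a basis.

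First, I would verify the inclusion: for any tuple $(a,b,c,d) \in \Z_{\ge 0}^4$ satisfying the stated constraints with $\lVert (a,b,c,d) \rVert \le m$, the monomial $x^a y^b v^c w^d$ has a pole only at $P_\infty$ (since $x,y,v,w$ have this property by Definition \ref{defff}), and by Remark \ref{ordinenorma} its pole order there is exactly $\lVert (a,b,c,d) \rVert \le m$. Hence $x^a y^b v^c w^d \in L(mP_\infty)$.

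Next, I would establish linear independence. The key point is that, by Lemma \ref{uniq2}, the map $(a,b,c,d) \mapsto \lVert (a,b,c,d) \rVert$ restricted to tuples with $b \in \{0,1\}$, $c,d \in \{0,\dots,q_0-1\}$ is injective. Therefore the finitely many monomials in our proposed basis have pairwise distinct pole orders at $P_\infty$. Rational functions with pairwise distinct pole orders at a common point are linearly independent (a minimal vanishing relation would isolate the term of largest pole order), so the proposed family is linearly independent in $L(mP_\infty)$.

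Finally, I would count to show the family spans. By Lemma \ref{weisem}, every element of $H(P_\infty)$ has the form $\lVert (a',b',c',d') \rVert$, and by Lemma \ref{uniq2} it can be uniquely rewritten as $\lVert (a,b,c,d) \rVert$ with $(a,b,c,d)$ satisfying the constraints of the proposition. Consequently, the set $H_m(P_\infty) := \{s \in H(P_\infty) : s \le m\}$ is in bijection with the set of tuples $(a,b,c,d)$ in the proposition, giving
\[
|\{(a,b,c,d) : b \le 1,\ c,d \le q_0-1,\ \lVert(a,b,c,d)\rVert \le m\}| = |H_m(P_\infty)| = \dim_{\F} L(mP_\infty),
\]
where the last equality is Remark \ref{banale}. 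Since we have exhibited that many linearly independent elements of $L(mP_\infty)$, they must form a basis. The only potentially delicate step is the bijection between $H_m(P_\infty)$ and the constrained tuples, but this is immediate from Lemmas \ref{weisem} and \ref{uniq2}; no further obstacle is anticipated.
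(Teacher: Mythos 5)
Your proposal is correct and follows essentially the same route as the paper's proof: membership via Remark \ref{ordinenorma}, linear independence from the distinct pole orders guaranteed by Lemma \ref{uniq2}, and the dimension count by matching the constrained tuples bijectively with $H_m(P_\infty)$ via Lemmas \ref{weisem} and \ref{uniq2} together with Remark \ref{banale}. The paper's version is just a more compressed statement of the same three steps.
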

\begin{proof}
By Lemma \ref{uniq2}, such rational functions have different pole orders at
$P_\infty$.
In particular, they are linearly independent. By Remark \ref{ordinenorma}, they
all
belong to $L(mP_\infty)$. Finally, by definition of $H(P_\infty)$ and Lemma \ref{uniq2}, 
their number
is
$\dim_{\F} L(mP_\infty)$. Hence they form a basis of the Riemann-Roch space
$L(mP_\infty)$.
\end{proof}

\section{Multiplication maps and their geometry}
\label{sec2}

Let $S_n$ be the Suzuki curve defined in the Introduction and let $\pi: C_n \to
S_n$ be its
normalization
(see Notation \ref{norm}). 
By Proposition
\ref{verya}, the linear system $|(q+2q_0+1)P_\infty|$ defines an embedding
$\varphi_{q+2q_0+1}: C_n \to \Pro^4$. We set $X_n:=\varphi_{q+2q_0+1}(C_n)$, 
a smooth curve 
of degree $q+2q_0+1$ in $\Pro^4$.

\begin{definition} \label{multipl}Given non-negative integers $a$, $b$ and $t$,
we will denote by
$\mu(a,b)$ and $\mu_t(a)$, respectively, the multiplication maps
$$\mu(a,b): L(aP_\infty) \otimes L(bP_\infty) \to L((a+b)P_\infty), \ \ \ \ \
\mu_t(a): {L(aP_\infty)}^{\otimes t}
\to L(taP_\infty).$$  Since  in the function field
defined by $S_n$ multiplication is commutative, each of
the maps $\mu_t(a)$ induces a multiplication map $\sigma_t(a):S^t(L(aP_\infty))
\to L(taP_\infty)$,
where $S^t(L(aP_\infty))$ denotes the $t$-th power of the symmetric tensor
product. 
\end{definition}

\begin{remark}
 This section is rather technical. Here we study the surjectivity
of the multiplication maps $\sigma_t(q+2q_0+1)$, $t \ge 1$, introduced in
Definition \ref{multipl}.
Interesting geometric applications will be shown later in the
paper. The
main results of
this section are Proposition \ref{chiavesurg} and its consequences (Corollary \ref{projn}). The proof
of the cited proposition is splitted in Lemmas \ref{surgia}, \ref{surgalta},
\ref{indep} and \ref{steppp}.
\end{remark}

\begin{lemma} \label{surgia}
 Let $\alpha$ and $\beta$ be non negative integers such that $\alpha+\beta \le
q_0-1$.
The multiplication map $\mu(\alpha(q+2q_0+1), \beta (q+2q_0+1))$ of Definition
 \ref{multipl} is surjective.
\end{lemma}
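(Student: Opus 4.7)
The plan is to reduce surjectivity to the combinatorics of the monomial basis furnished by Proposition \ref{base}. Since $\alpha+\beta\le q_0-1$, for every $\gamma\in\{\alpha,\beta,\alpha+\beta\}$ we are in the range where Lemma \ref{lem1} collapses the pole-order bound $\lVert(a,b,c,d)\rVert\le\gamma(q+2q_0+1)$ into the linear inequality $a+b+c+d\le\gamma$; moreover the constraints $c,d\le q_0-1$ of Proposition \ref{base} become automatic, since $c+d\le\gamma\le q_0-1$. Hence a basis of $L(\gamma(q+2q_0+1)P_\infty)$ is given by the monomials $x^ay^bv^cw^d$ with $b\in\{0,1\}$ and $a+b+c+d\le\gamma$.

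To prove surjectivity it suffices to exhibit, for every basis monomial $x^Ay^Bv^Cw^D$ of the codomain (so $B\in\{0,1\}$ and $A+B+C+D\le\alpha+\beta$), a factorisation
\[
x^Ay^Bv^Cw^D=\bigl(x^{A_1}y^{B_1}v^{C_1}w^{D_1}\bigr)\cdot\bigl(x^{A_2}y^{B_2}v^{C_2}w^{D_2}\bigr)
\]
in which the two factors are basis monomials of $L(\alpha(q+2q_0+1)P_\infty)$ and of $L(\beta(q+2q_0+1)P_\infty)$ respectively. I would choose $h_1\le\alpha$ and $h_2\le\beta$ with $h_1+h_2=A+B+C+D$, and then split the remaining exponents coordinate by coordinate as $(A,C,D)=(A_1,C_1,D_1)+(A_2,C_2,D_2)$ so that each partial sum $A_i+B_i+C_i+D_i$ equals $h_i$.

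The only step requiring actual thought, and hence the main (mild) obstacle, is the distribution of the exponent $B$, which must remain in $\{0,1\}$ on both sides. If $B=0$ I set $B_1=B_2=0$; if $B=1$ I place the single $y$-factor on whichever side has room, e.g.\ $B_1=1$ and $B_2=0$ when $\alpha\ge 1$, and symmetrically when $\alpha=0$ (the subcase $\alpha=\beta=0$ is trivial because the codomain is then $\F\cdot 1$). The resulting quadruples satisfy the hypotheses of Proposition \ref{base} and Lemma \ref{lem1} for the parameter $\alpha$ or $\beta$, so by Remark \ref{ordinenorma} each factor lies in the intended Riemann--Roch space, and their product recovers $x^Ay^Bv^Cw^D$. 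The image of $\mu(\alpha(q+2q_0+1),\beta(q+2q_0+1))$ therefore contains a basis of the codomain, giving surjectivity. The hypothesis $\alpha+\beta\le q_0-1$ is precisely what keeps us inside the regime where these monomial bases multiply cleanly, without having to invoke any algebraic identity between $x$, $y$, $v$ and $w$.
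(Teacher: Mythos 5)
Your proof is correct and follows essentially the same route as the paper: reduce to the monomial basis of Proposition \ref{base}, use $\alpha+\beta\le q_0-1$ to convert the pole-order bound into $a+b+c+d\le\alpha+\beta$, and split the exponent vector additively between the two tensor factors. The only (harmless) difference is that you insist each factor be itself a basis monomial, whereas the paper only needs $\lVert(a_i,b_i,c_i,d_i)\rVert\le\alpha(q+2q_0+1)$ (resp.\ $\beta(q+2q_0+1)$) so that each factor lies in the corresponding Riemann--Roch space by Remark \ref{ordinenorma}, which renders your care over distributing the exponent of $y$ unnecessary.
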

\begin{proof}
Since $\alpha$ and $\beta$ play interchangeable roles and the case $\alpha=0$ is
trivial,
we may assume $\beta \ge \alpha >0$. Keep on mind Proposition \ref{base} and
consider
a basis element, $x^ay^bv^cw^d$, of the Riemann-Roch space
$L((\alpha+\beta)(q+2q_0+1)P_\infty)$. We clearly have 
\begin{equation} \label{generaleq}
aq+b(q+q_0)+c(q+2q_0)+d(q+2q_0+1) \le (\alpha+\beta)(q+2q_0+1).
\end{equation}
Since $\alpha+\beta \le q_0-1$, we get
$\alpha+\beta \le q_0-1 < 2q_0^2/(2q_0+1)=q/(2q_0+1)$.
As a consequence, $(\alpha+\beta)(2q_0+1)<q$, i.e.,
$q(\alpha+\beta+1)>(\alpha+\beta)(q+2q_0+1)$.
By inequality (\ref{generaleq}) we have, in particular,
$(a+b+c+d)q \le (\alpha+\beta)(q+2q_0+1)<q(\alpha+\beta+1)$. Dividing by $q$ one
obtains
$a+b+c+d < \alpha+\beta +1$ and so $a+b+c+d \le \alpha+\beta$. Now we write
$(a,b,c,d)=(a_1,b_1,c_1,d_1)+(a_2,b_2,c_2,d_2)$ with
$a_1+b_1+c_1+d_1 \le \alpha$  and $a_2+b_2+c_2+d_2 \le \beta$.
It follows $\lVert (a_1,b_1,c_1,d_1) \rVert \le \alpha(q+2q_0+1)$, 
$\lVert (a_2,b_2,c_2,d_2) \rVert \le \beta(q+2q_0+1)$ and so
$$x^ay^bv^cw^d=\mu(\alpha(q+2q_0+1),\beta(q+2q_0+1)) \ \
(x^{a_1}y^{b_1}v^{c_1}w^{d_1} \otimes 
x^{a_2}y^{b_2}v^{c_2}w^{d_2}).$$
In other words, a generic basis element $x^ay^bv^cw^d \in
L((\alpha+\beta)(q+2q_0+1)P_\infty))$
 is in the image of $\mu(\alpha(q+2q_0+1),\beta(q+2q_0+1))$, as claimed.
\end{proof}

\begin{lemma}\label{surgalta}
  Let $t \ge 1$ be an integer. We follow the notation of Lemma \ref{comb}. Let
$(a,b,c,d) \in \Z_{\ge 0}^4$ with $a+b+c+d \le t$. There exist 4-tuple
${\{ (a_i,b_i,c_i,d_i) \}}_{i=1}^t \subseteq \mathcal{T}_1$ such that
$$(a,b,c,d)=\sum_{i=1}^t (a_i,b_i,c_i,d_i).$$
 \end{lemma}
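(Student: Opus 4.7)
The plan is to construct the decomposition explicitly. First I would interpret $\mathcal{T}_1$ in the statement as the natural 4-tuple analogue of the set introduced in Lemma \ref{comb}, namely the collection of $(a,b,c,d) \in \Z_{\ge 0}^4$ with $a+b+c+d \le 1$; in addition to the four standard basis vectors $e_1,e_2,e_3,e_4$ of $\Z^4$, this set contains the zero tuple $(0,0,0,0)$, whose presence is needed whenever $a+b+c+d < t$.

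Given $(a,b,c,d)$ with $a+b+c+d \le t$, I would then list the following $t$ vectors of $\mathcal{T}_1$: $e_1$ repeated $a$ times, $e_2$ repeated $b$ times, $e_3$ repeated $c$ times, $e_4$ repeated $d$ times, and $(0,0,0,0)$ repeated the remaining $t-(a+b+c+d)$ times. Their componentwise sum equals $ae_1+be_2+ce_3+de_4 = (a,b,c,d)$, and the total count is $t$ by construction. Equivalently, one can proceed by induction on $t$: at $t=1$ the hypothesis forces $(a,b,c,d) \in \mathcal{T}_1$; for $t \ge 2$ one either appends the zero tuple (if $a+b+c+d < t$) or subtracts a single basis vector $e_j$ from any strictly positive coordinate of $(a,b,c,d)$ (if $a+b+c+d = t$), and then applies the inductive hypothesis to the smaller 4-tuple.

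No substantive obstacle arises; the lemma is a purely combinatorial fact. What matters is its role in the sequel: combined with Proposition \ref{base} and Remark \ref{ordinenorma}, it will allow one to write each monomial basis element $x^ay^bv^cw^d \in L(t(q+2q_0+1)P_\infty)$ satisfying $a+b+c+d \le t$ as a product of $t$ elements of $\{1,x,y,v,w\} \subseteq L((q+2q_0+1)P_\infty)$, which is precisely the ingredient required to extend Lemma \ref{surgia} and ultimately feed into the surjectivity of $\sigma_t(q+2q_0+1)$ in Proposition \ref{chiavesurg}.
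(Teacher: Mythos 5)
Your proof is correct and takes essentially the same approach as the paper: your explicit list of $a$ copies of $e_1$, $b$ of $e_2$, $c$ of $e_3$, $d$ of $e_4$, padded with $t-(a+b+c+d)$ zero tuples, is just the unrolled form of the induction the paper uses, and your inductive variant coincides with the paper's argument verbatim. Your reading of $\mathcal{T}_1$ as the set of 4-tuples with coordinate sum at most $1$ (so that $(0,0,0,0)$ is available) is exactly the interpretation the paper's own proof relies on, even though Lemma \ref{comb} literally defines $\mathcal{T}_h$ only for 3-tuples with sum equal to $h$.
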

\begin{proof}
 We use induction on $t$. If $t=1$ then we take
$(a_1,b_1,c_1,d_1):=(a,b,c,d)$. Now assume $a+b+c+d \le t+1$. If
$a+b+c+d \le t$ then, by inductive hypothesis, we write
$(a,b,c,d)=\sum_{i=1}^t (a_i,b_i,c_i,d_i)$ with 
${\{ (a_i,b_i,c_i,d_i) \}}_{i=1}^t \subseteq \mathcal{T}_1$. As a consequence, we may 
define the 4-tuple $(a_{t+1},
b_{t+1}, c_{t+1}, d_{t+1}):=(0,0,0,0) \in \mathcal{T}_1$ and obtain
$(a,b,c,d)=\sum_{i=1}^{t+1} (a_i,b_i,c_i,d_i)$ . On the other hand, if $a+b+c+d=t+1$ then 
one among
$a,b,c,d$ must be positive. Assume without restriction $a>0$. Then, by
induction, $(a-1,b,c,d)= \sum_{i=1}^t (a_i,b_i,c_i,d_i)$ with 
${\{ (a_i,b_i,c_i,d_i) \}}_{i=1}^t \subseteq \mathcal{T}_1$. By setting $(a_{t+1},
b_{t+1}, c_{t+1}, d_{t+1}):=(1,0,0,0) \in \mathcal{T}_1$, we have
$(a,b,c,d)=\sum_{i=1}^{t+1} (a_i,b_i,c_i,d_i)$, and the lemma is proved.
\end{proof}

\begin{lemma}\label{indep}
 Let $m$ be a positive integer. Let $\{ f_1,...,f_h\} \subseteq L(mP_\infty)$ be
a set of rational functions such that for any $s \in H(P_\infty)$, with $s \le
m$, there exists a $1 \le j \le h$ such that $(f_j)_{\infty}=s$. Then $\{
f_1,...,f_h\}$ is a generating set of $L(mP_\infty)$.
\end{lemma}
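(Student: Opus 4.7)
The plan is to extract from $\{f_1, \ldots, f_h\}$ a linearly independent subfamily whose cardinality matches $\dim_\F L(mP_\infty)$, and then conclude by a dimension count. First I would set $H_m(P_\infty) := \{s \in H(P_\infty) : s \le m\}$ and, for each $s \in H_m(P_\infty)$, invoke the hypothesis to pick an index $j(s) \in \{1,\ldots,h\}$ with $(f_{j(s)})_\infty = s$. This produces a subfamily $\mathcal{F} := \{f_{j(s)} : s \in H_m(P_\infty)\} \subseteq L(mP_\infty)$ whose elements have, by construction, pairwise distinct pole orders at $P_\infty$.

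The core of the argument is then the classical observation that rational functions on a smooth curve with pairwise distinct pole orders at a single point are linearly independent over the ground field. To spell it out in our setting, I would suppose $\sum_{s} c_s f_{j(s)} = 0$ with some $c_s \ne 0$ and let $s^*$ be the largest index with $c_{s^*} \ne 0$. Then $c_{s^*} f_{j(s^*)}$ has a pole of order exactly $s^*$ at $P_\infty$, while every other nonzero summand has a pole of strictly smaller order; hence the sum itself has a pole of order $s^*$ at $P_\infty$, contradicting that it is zero. Therefore $\mathcal{F}$ is linearly independent.

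To close the argument I would appeal to Remark \ref{banale}, which yields $|\mathcal{F}| = |H_m(P_\infty)| = \dim_\F L(mP_\infty)$. It follows that $\mathcal{F}$ is already a basis of $L(mP_\infty)$, and since $\mathcal{F} \subseteq \{f_1, \ldots, f_h\}$, the original set is a fortiori a generating set. There is no genuine obstacle here: the proof just assembles the standard ``distinct pole orders $\Rightarrow$ linear independence'' trick with the dimension formula from Remark \ref{banale}, and the hypothesis that every non-gap $s \le m$ is realized as a pole order of some $f_j$ is exactly what makes the extracted subfamily $\mathcal{F}$ large enough to span.
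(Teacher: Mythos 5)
Your proposal is correct and follows essentially the same route as the paper: extract one function per non-gap $s \le m$, observe that distinct pole orders at $P_\infty$ force linear independence, and match the cardinality with $\dim_{\F} L(mP_\infty)$ via Remark \ref{banale} to conclude the subfamily is a basis. If anything, your write-up is slightly more careful, since you spell out the ``largest pole order dominates'' argument that the paper only gestures at.
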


\begin{proof}
 For any $s \in H(P)$, with $s \le m$, there exists a $1 \le j_s \le h$ such
that 
 $(f_{j_s})_{\infty}=sP_\infty$. Hence $\mathcal{B}_m:=\{ f_{j_s} : s \in
H(P_\infty), s \le m\}$ is a set of linearly
independent elements of $L(mP_\infty)$ whose cardinality is  $\dim_{\F}
L(mP_\infty)$. Indeed, its elements are rational
functions whose evaluations at $P_\infty$ are distinct. Hence $\mathcal{B}_m$ 
is a basis of 
$L(mP_\infty)$. We conclude by observing that $\{ f_1,...,f_h\}$ contains
$\mathcal{B}_m$.
\end{proof}

\begin{lemma} \label{steppp}
 Let $t \ge 2q_0+1$ be an integer. For any $s\in H(P_\infty)$, with $s \le
t(q+2q_0+1)$,  there exists a 4-tuple $(a,b,c,d)\in \Z_{\ge 0}^4$
such
that $\lVert (a,b,c,d)\rVert = s$ and $a+b+c+d\le t$. 
\end{lemma}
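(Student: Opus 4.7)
The plan is to start from the unique representation $s = \lVert(a,b,c,d)\rVert$ with $b \in \{0,1\}$ and $c, d \in \{0, \ldots, q_0-1\}$ provided by Lemma \ref{uniq2}. If $a+b+c+d \le t$ there is nothing to do, so set $\Delta := a+b+c+d - t > 0$. Expanding $s$ as $(a+b+c+d)q + bq_0 + 2cq_0 + (2q_0+1)d$ and substituting $q = 2q_0^2$, the hypothesis $s \le t(q+2q_0+1)$ becomes equivalent to the key inequality
\[
a(2q_0+1) + b(q_0+1) + c \ge \Delta(q+2q_0+1),
\]
which plays the role of the quantitative budget for all the reductions that follow.

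Next I would use the two semigroup syzygies
\[
(q_0+1)q = q_0(q+2q_0), \qquad q_0\cdot q + (q+q_0) = q_0(q+2q_0+1),
\]
each of which trades $q_0+1$ summands for $q_0$ summands, lowering $a+b+c+d$ by one. The auxiliary identity $q + (q+2q_0) = 2(q+q_0)$ manufactures extra $y$'s at no cost in the sum, allowing the second syzygy to be iterated beyond the single application allowed by the constraint $b \le 1$. Parametrising the modifications by non-negative integers $k$ and $\ell$ counting the two types of reductions, the problem reduces to exhibiting non-negative integers $k, \ell$ satisfying
\[
k + 2\ell \ge \Delta - b, \qquad (q_0+1)k + (2q_0+1)\ell \le a - bq_0, \qquad \ell \le c + kq_0.
\]
A direct computation on the linear-programming relaxation of this system shows it is rationally feasible precisely when the key inequality above holds.

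The main obstacle will be producing genuine \emph{integer} $(k, \ell)$. The argument splits into two regimes. When $s \le t(q+2q_0+1) - (2q_0+1)$, the level line $\{k + 2\ell = \Delta - b\}$ cuts out an interval of length at least one in the feasible polytope, which therefore contains an integer; rounding the LP solution finishes this case. The remaining $2q_0+1$ boundary values $s = tw - j$ with $j \in \{0, 1, \ldots, 2q_0\}$ and $w := q+2q_0+1$ I would handle by direct construction. Using $q = 2q_0^2$, a straightforward expansion verifies the identities
\[
\lVert(0, 0, j, t-j)\rVert = tw - j \quad (0 \le j \le q_0),
\]
\[
\lVert(0, 1, j-q_0-1, t-j+q_0)\rVert = tw - j \quad (q_0 < j \le 2q_0),
\]
each producing a representation with total multiplicity exactly $t$. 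The hypothesis $t \ge 2q_0+1$ is needed precisely here: it guarantees the non-negativity of the entries $t-j$ and $t-j+q_0$ in the second family, and ensures enough slack for the rounding step in the generic case.
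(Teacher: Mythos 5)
Your strategy is genuinely different from the paper's and can probably be completed, but the step on which everything hinges is asserted rather than proved, and one of the supporting claims is false as stated. First, the assertion that the linear-programming relaxation is feasible ``precisely when the key inequality holds'' is not correct: on the level line $k+2\ell=\Delta-b$ the admissible $\ell$ range is bounded above by the \emph{minimum} of $((\Delta-b)q_0+c)/(2q_0+1)$ and $(\Delta-b)/2$ (the latter coming from $k\ge 0$), and when the second term is the smaller one, rational feasibility requires the slack $t(q+2q_0+1)-s$ to be at least $c-(\Delta-b)/2$, which can be as large as $q_0-1$ and is not implied by the key inequality alone. Second, the rounding step carries the entire difficulty of the lemma and is left as an assertion. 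One can check that when $2c\le\Delta-b$ the feasible segment has length exactly $(t(q+2q_0+1)-s)/(2q_0+1)$, so length at least one does follow from $s\le t(q+2q_0+1)-(2q_0+1)$; but in the complementary case $2c>\Delta-b$ the segment has length $a-(\Delta-b)(q_0+\tfrac12)-bq_0$, which under your hypothesis is only guaranteed to exceed $\tfrac12$, and one needs a parity argument (the endpoint $(\Delta-b)/2$ and the length are integers or half-integers according to the parity of $\Delta-b$) to conclude that an integer point exists. You should also spell out the bookkeeping behind the three displayed constraints --- which moves consume which of $a$, $b$, $c$, in what order they can be performed, and why the reduction count is $k+2\ell+b$ rather than $k+\ell$ --- since as written the system has to be reverse-engineered. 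Until these points are supplied the proof is incomplete at exactly its delicate spot.

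For comparison, the paper's proof avoids all of this machinery. It splits on whether $s\le tq$: if so, \emph{any} representation $\lVert(a,b,c,d)\rVert=s$ automatically satisfies $a+b+c+d\le t$, since otherwise $s\ge(t+1)q>tq\ge s$. If $s>tq$, it writes $s=\alpha(q+2q_0+1)-\beta$ with $0\le\beta\le q+2q_0$ and greedily decomposes $\beta=e_1(2q_0+1)+e_2(q_0+1)+e_3$ with $e_1\le q_0$, $e_2\in\{0,1\}$, $e_3\le q_0$ and $e_1+e_2+e_3\le 2q_0$; then $(e_1,e_2,e_3,\alpha-e_1-e_2-e_3)$ is the desired tuple, the hypothesis $t\ge 2q_0+1$ entering only through $s>tq\ge(2q_0+1)q>(2q_0-1)(q+2q_0+1)$, which forces $\alpha\ge 2q_0\ge e_1+e_2+e_3$. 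Your two boundary families are precisely this decomposition restricted to $\beta\le 2q_0$; observing that the same greedy decomposition works for all $\beta\le q+2q_0$ makes the canonical representation, the syzygy reductions and the integer-programming analysis unnecessary.
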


\begin{proof}
The argument is divided into two steps.
 \begin{enumerate}
 \item[(A)] Here we assume $s\le tq$ and take any 4-tuple $(a,b,c,d)$ such that
$\lVert (a,b,c,d)\rVert = s$; such a 4-tuple exists because $s \in
H(P_\infty)$.
If $a+b+c+d \ge t+1$, then we clearly have the contradiction $s=\lVert
(a,b,c,d)\rVert \ge (t+1)q >s$. Hence $a+b+c+d \le t$, and we
are done.

\item[(B)] Now assume $s>tq$. Write $s= \alpha(q+2q_0+1)-\beta$ with $\alpha,
\beta \in \Z_{\ge 0}$ and $0\le \beta \le
q+2q_0$. Since $s \le t(q+2q_0+1)$, we  have $0 \le \alpha \le t$.
Set: \begin{equation*}
      e_1:= \Big\lfloor \frac{\beta}{2q_0+1}\Big\rfloor, \ \ \ \
\ e_2:=\Big\lfloor
\frac{\beta -e_1(2q_0+1)}{q_0+1}\Big\rfloor, \ \ \ \ \  e_3:=
\beta -e_1(2q_0+1)-e_2(q_0+1).
     \end{equation*}
Notice
that $e_2\in \{0,1\}$ and $e_3\le q_0$.
Since $\beta \le q+2q_0 =(2q_0+1)q_0$, we get $e_1\le q_0$ and the equality
holds if and only if $\beta=q+2q_0$. In this case we have $e_2=\lfloor
q_0/(q_0+1)\rfloor=0$. Hence, in any case,  $e_1+e_2+e_3 \le 2q_0$.
We may also observe
$$s >tq \ge (2q_0+1)q > (2q_0-1)(q+2q_0+1).$$
Since $s=\alpha(q+2q_0+1)-\beta$ with $0 \le \beta \le q+2q_0$, we deduce $\alpha \ge 2q_0 \ge 
e_1+e_2+e_3$. Hence $e_1+e_2+e_3 \le \alpha \le t$ and we may take
$a:= e_1$, $b:=e_2$, $c:=
e_3$ and $d:= \alpha-e_1-e_2-e_3$ to conclude the proof. \qedhere
\end{enumerate}
\end{proof}

\begin{proposition} \label{chiavesurg}
 Let $t$ be a positive integer and let $\sigma_t(q+2q_0+1)$ be as in
 Definition \ref{multipl}.
\begin{enumerate}
 \item If $1 \le t \le q_0$ then $\sigma_t(q+2q_0+1)$ is surjective.
\item If $t \ge 2q_0+1$ then  $\sigma_t(q+2q_0+1)$ is surjective.
\end{enumerate}
\end{proposition}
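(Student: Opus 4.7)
The plan is to reduce both statements to a single combinatorial assertion, which I will call $(\star)$: \emph{for every $s \in H(P_\infty)$ with $s \le t(q+2q_0+1)$, there exists $(a,b,c,d) \in \Z_{\ge 0}^4$ with $\lVert(a,b,c,d)\rVert = s$ and $a+b+c+d \le t$}. Granting $(\star)$, Lemma \ref{surgalta} decomposes such a $(a,b,c,d)$ as a sum of $t$ elements of $\mathcal{T}_1$, so the rational function $x^a y^b v^c w^d$ becomes a product of $t$ factors drawn from $\{1,x,y,v,w\}$, which by Proposition \ref{base} is a basis of $L((q+2q_0+1)P_\infty)$. Thus $x^a y^b v^c w^d$ lies in the image of $\sigma_t(q+2q_0+1)$, and by Remark \ref{ordinenorma} this produces, for every admissible $s$, a function in the image whose pole order at $P_\infty$ is exactly $s$. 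Applying Lemma \ref{indep} then forces surjectivity.

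Under this reduction, the two asymptotic ranges drop out at once. For part (1) with $1 \le t \le q_0-1$, Lemma \ref{lem1} asserts that $\lVert(a,b,c,d)\rVert \le t(q+2q_0+1)$ is equivalent to $a+b+c+d \le t$, so every 4-tuple realizing $s$ automatically satisfies the sum constraint and $(\star)$ is immediate. For part (2) with $t \ge 2q_0+1$, the statement of $(\star)$ is precisely the conclusion of Lemma \ref{steppp}.

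The main obstacle is the remaining boundary case $t = q_0$, which is covered neither by Lemma \ref{lem1} nor by Lemma \ref{steppp}. Here I would split on the size of $s$: if $s \le q_0 q$, then any representation $\lVert(a,b,c,d)\rVert = s$ obeys $(a+b+c+d)q \le s \le q_0 q$, so the sum bound is automatic. For $q_0 q < s \le q_0(q+2q_0+1)$ the sum bound may genuinely fail in the canonical form provided by Lemma \ref{uniq2}, and the idea is to classify all offending canonical representations. Writing $s = (a+b+c+d)q + bq_0 + 2cq_0 + d(2q_0+1)$ and imposing $a+b+c+d = q_0+1$ together with $s \le q_0(q+2q_0+1)$ forces $d=0$ and $b+2c \le 1$, leaving only the two canonical forms $(q_0+1,0,0,0)$ and $(q_0,1,0,0)$, corresponding respectively to $s = q_0(q+2q_0)$ and $s = q_0(q+2q_0+1)$. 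For each of these, a non-canonical short representation is immediately available, namely $(0,0,q_0,0)$ and $(0,0,0,q_0)$, each of sum exactly $q_0$; this restores $(\star)$ and completes the argument.
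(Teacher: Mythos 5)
Your proof is correct, and its overall architecture (reduce everything to the existence, for each $s\in H(P_\infty)$ with $s\le t(q+2q_0+1)$, of a representation $\lVert(a,b,c,d)\rVert=s$ with $a+b+c+d\le t$, then decompose via Lemma \ref{surgalta} and conclude via Lemma \ref{indep}) is the same as in the paper's steps (B) and (C). There are two differences. For $1\le t\le q_0-1$ the paper instead factors $\sigma_t$ through the two-factor multiplication map and invokes Lemma \ref{surgia}; your direct appeal to Lemma \ref{lem1} is equivalent and makes the treatment of the three ranges more uniform. The genuine divergence is the boundary case $t=q_0$: the paper handles the two exceptional pole orders $q_0(q+2q_0)$ and $q_0(q+2q_0+1)$ by showing that $x^{q_0+1}$ and $x^{q_0}y$ lie in the image via the algebraic identities $x^{q_0+1}=v^{q_0}-y$ and $x^{q_0}y=w^{q_0}-v$ that hold on the curve, whereas you observe that these same pole orders are already realized by $v^{q_0}$ and $w^{q_0}$, which are manifestly in the image of $\sigma_{q_0}$. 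Your route is purely numerical (it only uses the coincidences $\lVert(q_0+1,0,0,0)\rVert=\lVert(0,0,q_0,0)\rVert$ and $\lVert(q_0,1,0,0)\rVert=\lVert(0,0,0,q_0)\rVert$, which do not contradict Lemma \ref{uniq2} since $c=q_0$ and $d=q_0$ fall outside the canonical range) and avoids the defining equation of $S_n$ altogether; what the paper's route buys is the explicit pair of degree-$(q_0+1)$ relations that are reused later in Example \ref{deg2} and in the discussion of Corollary \ref{bombetta}. One small point you should make explicit: when classifying the offending representations for $t=q_0$ you impose $a+b+c+d=q_0+1$, and you should record why sums $\ge q_0+2$ cannot occur, namely $(q_0+2)q>q_0(q+2q_0+1)$, exactly as the paper does in step (C.2).
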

\begin{proof}
Let us divide the proof into three steps.
\begin{enumerate}
 \item[(A)] Here we assume $1 \le t \le q_0-1$. In the notations of Definition \ref{multipl}, 
 the image of the map
$\sigma_t(q+2q_0+1)$ and the image of the map $\mu_t(q+2q_0+1)$ coincide.
Moreover, the image of $\mu_t(q+2q_0+1)$ contains the image of 
$\mu(q+2q_0+1, (t-1)(q+2q_0+1))$.
Since $t =1+(t-1) \le q_0-1$, by  Lemma \ref{surgia} the map
$\mu(q+2q_0+1, (t-1)(q+2q_0+1))$ 
is surjective, and we are done.

\item[(B)] Assume $t \ge 2q_0+1$. We recall that $L(q+2q_0+1)$ has $\{1, x,y,v,w\}$ as a basis. 
Moreover,
$1, x,y,v,w$ have the following pole divisors:
$${(1)}_\infty=0, \ \ \ \ \ {(x)}_\infty=q, \ \ \ \ \ {(y)}_\infty=q+q_0, \
\ \ \ \ {(v)}_\infty=q+2q_0, \ \ \ \ \ {(w)}_\infty=q+2q_0+1.$$
Take any $s \in H(P_\infty)$ with $s \le
t(q+2q_0+1)$. By Lemma \ref{steppp} there exists a 4-tuple $(a,b,c,d)\in \Z_{\ge 0}^4$
such
that $\lVert (a,b,c,d)\rVert = s$ and $a+b+c+d\le t$. 
Thanks to Lemma \ref{surgalta}, we write $(a,b,c,d)=\sum_{i=1}^t (a_i,b_i,c_i,d_i)$,
with $(a_i,b_i,c_i,d_i) \in \mathcal{T}_1$ for any $i \in \{ 1,...,t\}$. Hence, for any $i\in \{ 1,...,t\}$, we have
$x^{a_i}y^{b_i}v^{c_i}w^{d_i} \in L((q+2q_0+1)P_\infty)$. Moreover, 
$$\sigma_t (q+2q_0+1) \left( \bigotimes_{i=1}^t \ x^{a_i}y^{b_i}v^{c_i}w^{d_i}\right)=x^ay^bv^cw^d$$
is a rational function in the image of $\sigma_t$ whose pole divisor is exactly $sP_\infty$.
Notice that $s$ is arbitrary in $H(P_\infty)$ with $s \le t(q+2q_0+1)$. Hence,
by Lemma \ref{indep}, the image of $\sigma_t(q+2q_0+1)$ spans the vector space 
$L(t(q+2q_0+1)P_\infty)$, i.e., $\sigma_t(q+2q_0+1)$
is surjective. 

\item[(C)] Let us consider the case $t=q_0$. 
By Lemma \ref{indep}, it is enough to prove that for any
$s\in H(P_\infty)$, with $s \le
q_0(q+2q_0+1)$,  there exists a rational function, say $f$, in the image of
$\sigma_{q_0}(q+2q_0+1)$ with the property  $(f)_\infty=sP_\infty$.
Write $s=\lVert (a,b,c,d) \rVert$ for a certain $(a,b,c,d) \in \Z_{\ge 0}^4$.
\begin{enumerate}
 \item[(C.1)] If $s \le (q_0-1)(q+2q_0+1)$ then, by Lemma \ref{lem1}, we 
conclude $a+b+c+d \le q_0-1<q_0$. In this case we are done, as in step (B).
\item[(C.2)] Assume $s>(q_0-1)(q+2q_0+1)$. If $a+b+c+d \le q_0$ then, as above, the result 
is proved. 
Hence we will
assume $a+b+c+d \ge q_0+1$ for the rest of the proof. If $a+b+c+d \ge q_0+2$ then
$\lVert (a,b,c,d) \rVert \ge (q_0+2)q>q_0(q+2q_0+1)$. As a consequence, 
we have $a+b+c+d \le q_0+1$
and so $a+b+c+d=q_0+1$.
Assume $a\le q_0-1$. Then $b+c+d \ge 2$ and so $\lVert (a,b,c,d) \rVert
\ge (q_0+1)q+2q_0>q_0(q+2q_0+1)$, a contradiction. It follows $a \in \{ q_0,q_0+1\}$, and
we can study the two cases separately.
\begin{itemize}
\item If $a=q_0+1$ then clearly $b=c=d=0$ and so $x^{q_0+1}$ is a rational function with
the expected pole divisor. Working modulo the equation of $S_n$, we have
$x^{q_0+1}=v^{q_0}-y$ (see \cite{AMS}, page 4). Since $v^{q_0}$ and $y$
trivially
belong to the image of $\sigma_{q_0}(q+2q_0+1)$, $x^{q_0+1}$ also belongs to such image.
\item If $a=q_0$ and $(c,d) \neq (0,0)$, then $\lVert (a,b,c,d) \rVert \ge 
q_0q+(q+2q_0)>q_0(q+2q_0+1)$, a contradiction. It follows $c=d=0$ and $b=1$. Notice that
 $x^{q_0}y$ 
is a rational function with
the expected pole divisor. Moreover, $x^qy=w^{q_0}-v$ (again \cite{AMS}, page
4) and so we conclude as in the previous
 step. \qedhere
\end{itemize}
\end{enumerate}
\end{enumerate}
\end{proof}

\begin{corollary}\label{projn}
 Let $t$ be an integer. If $1 \le t \le q_0$ or $t \ge 2q_0+1$ then
the restriction 
map
of cohomology groups 
$\rho_t: H^0(\Pro^4, \Ol_{\Pro^4}(t)) \to H^0(X_n, \Ol_{X_n}(t))$
is surjective.
\end{corollary}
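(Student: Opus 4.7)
The plan is to factor $\rho_t$ through the multiplication map $\sigma_t(q+2q_0+1)$ studied above, and then invoke Proposition \ref{chiavesurg} directly. There is essentially one nontrivial check (the factorization), after which the conclusion is automatic.

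First I would use the fact that $X_n$ is the image of the embedding $\varphi_{q+2q_0+1}:C_n\to\Pro^4$ associated to the complete linear system $|(q+2q_0+1)P_\infty|$. This yields a canonical isomorphism $\varphi_{q+2q_0+1}^*\Ol_{\Pro^4}(1)\cong \Ol_{C_n}((q+2q_0+1)P_\infty)$ and, since $\varphi_{q+2q_0+1}$ is a closed embedding identifying $C_n$ with $X_n$, canonical identifications
\begin{equation*}
H^0(\Pro^4,\Ol_{\Pro^4}(1))\cong L((q+2q_0+1)P_\infty),\qquad H^0(X_n,\Ol_{X_n}(t))\cong L(t(q+2q_0+1)P_\infty).
\end{equation*}
Concretely, the basis $\{1,x,y,v,w\}$ of $L((q+2q_0+1)P_\infty)$ provided by Proposition \ref{base} gives homogeneous coordinates $(z_0:z_1:z_2:z_3:z_4)$ on $\Pro^4$.

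Second, I would note the standard canonical isomorphism $H^0(\Pro^4,\Ol_{\Pro^4}(t))\cong S^t H^0(\Pro^4,\Ol_{\Pro^4}(1))$. Combining it with the identifications above, the restriction map $\rho_t$ fits into the commutative diagram
\begin{equation*}
\xymatrix@R=6pt{H^0(\Pro^4,\Ol_{\Pro^4}(t))\ar[r]^-{\sim}\ar[dd]_{\rho_t} & S^tH^0(\Pro^4,\Ol_{\Pro^4}(1))\ar[d]^{\sim}\\ & S^tL((q+2q_0+1)P_\infty)\ar[d]^{\sigma_t(q+2q_0+1)}\\ H^0(X_n,\Ol_{X_n}(t))\ar[r]^-{\sim} & L(t(q+2q_0+1)P_\infty)}
\end{equation*}
which essentially expresses the multiplicativity of pullback: a degree $t$ monomial in $z_0,\dots,z_4$ restricts to the corresponding product of $t$ elements of $\{1,x,y,v,w\}$ in the function field of $C_n$, and commutativity of this function field makes the map factor through the symmetric power. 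Hence $\rho_t$ is surjective if and only if $\sigma_t(q+2q_0+1)$ is.

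Finally, for $1\le t\le q_0$ or $t\ge 2q_0+1$, Proposition \ref{chiavesurg} asserts exactly that $\sigma_t(q+2q_0+1)$ is surjective, so the corollary follows. The only point requiring any care is the identification of $\rho_t$ with $\sigma_t(q+2q_0+1)$ through the Veronese isomorphism; everything else is a direct appeal to results already established.
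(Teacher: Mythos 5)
Your proposal is correct and follows essentially the same route as the paper: both identify $H^0(\Pro^4,\Ol_{\Pro^4}(t))$ with the $t$-th symmetric power of $H^0(\Pro^4,\Ol_{\Pro^4}(1))\cong L((q+2q_0+1)P_\infty)$, factor $\rho_t$ through the multiplication map $\sigma_t(q+2q_0+1)$, and conclude by Proposition \ref{chiavesurg}. Your write-up merely makes the Veronese identification and the commutative diagram more explicit than the paper does.
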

\begin{proof}
 Since the embedding of curves $\varphi_{q+2q_0+1}:C_n \to X_n$ is induced by
the linear system
$|(q+2q_0+1)P_\infty|$, the pull-back
bundle of $\Ol_{X_n}(1)$  through $\varphi_{q+2q_0+1}$ is 
that associated to the linear system $|(q+2q_0+1)P_\infty|$.
By Proposition \ref{chiavesurg}, the restriction map 
$S^t(H^0(\Pro^4, \Ol_{\Pro^4}(1))) \to H^0(X_n, \Ol_{X_n}(t))$ is surjective.
The thesis follows. 
\end{proof}

\section{The smooth model of a Suzuki curve in $\Pro^4$} \label{geom}

Here we study the
geometric properties of the smooth model $X_n \subseteq \Pro^4$ of a Suzuki
curve $S_n$. We apply the computational results derived
in the previous
parts of the paper in order to count the
hypersurfaces of $\Pro^4$ containing $X_n$ (Theorem \ref{ttt}). Moreover, we
provide an explicit geometric characterization of
those of small degree (Corollary \ref{bombetta}).

\begin{theorem} \label{ttt}
 Let $t$ be a positive integer and let $\mathcal{K}(t,X_n)$
denote the $\F$-vector space  of
all the degree $t$ 
hypersurfaces of $\Pro^4$ containing $X_n$. Let $\kappa(t,X_n)$ be 
the dimension of $\mathcal{K}(t,X_n)$. The following formulas hold:
$$ 
\kappa(t,X_n)= \left\{ \begin{array}{ll} 
                \binom{t+4}{4}-\binom{t+2}{4} & \mbox{ if $2 \le t \le q_0$,} \\
 \binom{t+4}{4}-t(q+2q_0+1)-1+g_n & \mbox{ if $t \ge 2q_0+1$.} 
               \end{array} \right.\  $$
\end{theorem}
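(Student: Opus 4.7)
The plan is to deduce both formulas from the standard short exact sequence of sheaves on $\Pro^4$
\begin{equation*}
0 \to \mathcal{I}_{X_n}(t) \to \Ol_{\Pro^4}(t) \to \Ol_{X_n}(t) \to 0,
\end{equation*}
via the associated long exact sequence of cohomology. By definition $\mathcal{K}(t,X_n)=H^0(\Pro^4,\mathcal{I}_{X_n}(t))$, so this gives
\begin{equation*}
\kappa(t,X_n) \;=\; \binom{t+4}{4} - \dim_{\F}\mathrm{Im}(\rho_t),
\end{equation*}
where $\rho_t$ is the restriction map of Corollary \ref{projn}. That corollary guarantees surjectivity of $\rho_t$ in both ranges $2 \le t \le q_0$ and $t \ge 2q_0+1$ considered by the theorem, so throughout we may replace $\dim_{\F}\mathrm{Im}(\rho_t)$ by $\dim_{\F} H^0(X_n,\Ol_{X_n}(t))$.

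The next step is to identify $H^0(X_n,\Ol_{X_n}(t))$ with the Riemann-Roch space $L(t(q+2q_0+1)P_\infty)$. This is because $X_n = \varphi_{q+2q_0+1}(C_n)$ is the image of the embedding defined by the complete linear system $|(q+2q_0+1)P_\infty|$: the pullback of $\Ol_{X_n}(1)$ to $C_n$ is the line bundle of $(q+2q_0+1)P_\infty$, and the pullback of $\Ol_{X_n}(t)$ is therefore the line bundle associated with $t(q+2q_0+1)P_\infty$. One then substitutes the explicit dimension formulas of Proposition \ref{dimension}: case (B) for $2 \le t \le q_0-1$, case (D) for $t = q_0$ (which lies in $[q_0,2q_0-4]$ because $q_0=2^n \ge 4$ whenever $n \ge 2$), and the bottom case $\dim_{\F} L = t(q+2q_0+1)+1-g_n$ for $t \ge 2q_0+1$; the latter immediately produces the second clause of the theorem.

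The main obstacle lies in the boundary case $t = q_0$, where Proposition \ref{dimension} supplies a Riemann-Roch expression involving the correction $\binom{2q_0-t+2}{4}-\binom{2q_0-t}{4}$, whereas the combinatorial range $2 \le t \le q_0-1$ produces a purely binomial expression. One has to verify by elementary binomial manipulation, using $g_n = q_0(q-1)$, that the two computations collapse to a single uniform formula on $2 \le t \le q_0$. This is routine bookkeeping rather than a conceptual step, but it is the only place where some care is genuinely required; once it has been carried out, the two ranges in the statement are handled without further difficulty.
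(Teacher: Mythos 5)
Your route is exactly the paper's: identify $\mathcal{K}(t,X_n)$ with the kernel of the restriction map $\rho_t$, invoke the surjectivity of $\rho_t$ from Corollary \ref{projn} in both ranges, identify $H^0(X_n,\Ol_{X_n}(t))$ with $L(t(q+2q_0+1)P_\infty)$ via the embedding, and substitute the formulas of Proposition \ref{dimension}. Your extra care at the boundary $t=q_0$ is in fact needed (the binomial case of Proposition \ref{dimension} stops at $q_0-1$, and case (D) applies since $q_0=2^n\ge 4$), and the identity $q_0(q+2q_0+1)+1-g_n+\binom{q_0+2}{4}-\binom{q_0}{4}=\binom{q_0+4}{4}-\binom{q_0+2}{4}$ does hold, so up to that point your argument matches, and slightly refines, the paper's.

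However, the last step as you state it would fail. On $2\le t\le q_0$ your substitution yields
$$\kappa(t,X_n)=\binom{t+4}{4}-\dim_{\F} L(t(q+2q_0+1)P_\infty)=\binom{t+4}{4}-\left[\binom{t+4}{4}-\binom{t+2}{4}\right]=\binom{t+2}{4},$$
not the displayed $\binom{t+4}{4}-\binom{t+2}{4}$; the latter quantity is $h^0(X_n,\Ol_{X_n}(t))=\dim_{\F}\mathrm{Im}(\rho_t)$, i.e.\ the \emph{codimension} of $\mathcal{K}(t,X_n)$ in $H^0(\Pro^4,\Ol_{\Pro^4}(t))$, not its dimension. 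The case $t=2$ makes the discrepancy concrete: the displayed formula gives $14$, while the uniqueness of the quadric $Q_n$ (Corollary \ref{bombetta}, Example \ref{deg2}) forces $\kappa(2,X_n)=1=\binom{4}{4}$. So the ``routine bookkeeping'' you defer cannot produce the first clause as printed: what collapses to a single binomial expression on $2\le t\le q_0$ is $\dim_{\F} L(t(q+2q_0+1)P_\infty)$, whence $\kappa(t,X_n)=\binom{t+2}{4}$ there. (The paper's own proof performs the same substitution, so the mismatch is between the proof and the displayed formula rather than in your method; but as a verification of the statement as printed, your concluding claim is not correct and should either be corrected to $\binom{t+2}{4}$ or the inconsistency explicitly flagged.) The clause for $t\ge 2q_0+1$ is fine as you argue it.
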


\begin{proof}
The vector space $\mathcal{K}(t,X_n)$, whose dimension is in question, is exactly 
the kernel of the restriction
map
$\rho_t:H^0(\Pro^4, \Ol_{\Pro^4}(t)) \to H^0(X_n, \Ol_{X_n}(t))$. If  
$2 \le t \le q_0$ or $t \ge 2q_0+1$, by Corollary \ref{projn}, 
 $\rho_t$ 
is surjective. It follows
$$\kappa(t,X_n) = h^0(\Pro^4, \Ol_{\Pro^4}(t))-h^0(X_n, \Ol_{X_n}(t)) =
 h^0(\Pro^4, \Ol_{\Pro^4}(t))-\dim_{\F} L(t(q+2q_0+1)P_\infty). $$
Now use the formulas given in Proposition \ref{dimension}.
\end{proof}

Theorem \ref{ttt} allows us to geometrically charaterize 
all the small-degree hypersurfaces of $\Pro^4$ containing $X_n$. 

\begin{corollary}\label{bombetta}
 Let $X_n$ be the smooth projective model of the Suzuki curve $S_n$ in $\Pro^4$,
obtained through 
the linear system
$|(q+2q_0+1)P_\infty|$. The following facts hold.
\begin{enumerate}
 \item There exists a unique degree two hypersurface $Q_n \subseteq \Pro^4$
containing $X_n$.
\item Let $2 \le t \le q_0$ be an integer. The degree $t$ hypersurfaces of $\Pro^4$ containing
$X_n$ are exactly those containing $Q_n$. Moreover, they form an $\F$-vector
space of dimension
$\binom{t+4}{4}-\binom{t+2}{4}$.
\item There exist at least four linearly independent degree $q_0+1$
hypersurfaces of $\Pro^4$ containing $X_n$ and not containing $Q_n$.
\end{enumerate}
\end{corollary}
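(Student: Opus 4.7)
The plan is to combine Theorem \ref{ttt} with a direct dimension count for the subspace of hypersurfaces that automatically contain $Q_n$.

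For (1), specializing Theorem \ref{ttt} to $t = 2$ (which lies in the range $2 \le t \le q_0$ since $q_0 \ge 2$) yields $\dim_{\F} \mathcal{K}(2, X_n) = 1$, so there is, up to scalar, a unique quadric $Q_n \subseteq \Pro^4$ containing $X_n$. For (2), let $Q \in \F[X_0, \ldots, X_4]$ denote a defining polynomial of $Q_n$. Multiplication by $Q$ provides an injective $\F$-linear map $H^0(\Pro^4, \Ol_{\Pro^4}(t-2)) \hookrightarrow \mathcal{K}(t, X_n)$ whose image is exactly the subspace of degree $t$ hypersurfaces containing $Q_n$, which sits inside $\mathcal{K}(t, X_n)$ because $X_n \subseteq Q_n$. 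That image has dimension $\binom{t+2}{4}$, which by Theorem \ref{ttt} coincides with $\dim_{\F} \mathcal{K}(t, X_n)$ for $2 \le t \le q_0$. Hence the injection is surjective, proving (2).

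For (3), set $t = q_0 + 1$. This value lies outside both ranges covered by Theorem \ref{ttt}, and Corollary \ref{projn} does not assert surjectivity of $\rho_{q_0+1}$. Still, the cohomological inequality
\[
\dim_{\F} \mathcal{K}(q_0+1, X_n) \;\ge\; \binom{q_0+5}{4} \;-\; \dim_{\F} L\bigl((q_0+1)(q+2q_0+1) P_\infty\bigr)
\]
holds unconditionally, since the image of $\rho_{q_0+1}$ sits inside $L((q_0+1)(q+2q_0+1) P_\infty)$. I would then apply Proposition \ref{dimension} to evaluate the right-hand side explicitly, treating the four subcases where $t = q_0+1$ equals $2q_0-1$, $2q_0-2$, $2q_0-3$, or lies in $[q_0, 2q_0-4]$ (corresponding to $q_0 = 2$, $3$, $4$, or $q_0 \ge 5$ respectively). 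In each subcase a direct substitution, using $q = 2q_0^2$ and $g_n = q_0(q-1)$, reduces the lower bound to the single value $\binom{q_0+3}{4} + 4$. Since the subspace of $\mathcal{K}(q_0+1, X_n)$ consisting of hypersurfaces containing $Q_n$ has dimension exactly $\binom{q_0+3}{4}$ by the multiplication-by-$Q$ argument of (2), at least four linearly independent additional hypersurfaces must exist.

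The main obstacle is the four-case verification in (3): each substitution into Proposition \ref{dimension} must collapse to the uniform value $\binom{q_0+3}{4} + 4$. The reductions are elementary but nontrivial, as the combinatorial correction term in $\dim_{\F} L$ varies with the position of $t$ relative to $q_0$.
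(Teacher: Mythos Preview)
Your proposal is correct and follows essentially the same approach as the paper: the paper frames (2) via the exact sequence $0 \to \Ol_{\Pro^4}(t-2) \to \Ol_{\Pro^4}(t) \to \Ol_{Q_n}(t) \to 0$ and a factorization of $\rho_t$ through $Q_n$ (equivalent to your multiplication-by-$Q$ map), and for (3) it likewise bounds $\dim_{\F}\ker(\rho_{q_0+1})$ from below using Proposition \ref{dimension} to obtain the uniform value $\dim_{\F} L((q_0+1)(q+2q_0+1)P_\infty)=\binom{q_0+5}{4}-\binom{q_0+3}{4}-4$. One small simplification for your case analysis in (3): since $q_0=2^n$ with $n\ge 2$, only the subcases $q_0=4$ (where $t=2q_0-3$) and $q_0\ge 8$ (where $q_0\le t\le 2q_0-4$) actually occur, so two of your four cases are vacuous.
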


\begin{proof}
Let $t\ge 2$ be any integer. Since $h^1(\mathbb {P}^4,\mathcal {O}_{\mathbb {P}^4}(t-2))=0$, 
the well-known exact
sequence  of sheaves
$0\to \mathcal {O}_{\mathbb {P}^4}(t-2) \to \mathcal {O}_{\mathbb {P}^4}(t) \to
\mathcal 
{O}_{Q_n}(t)\to 0$
gives that the induced restriction map of cohomology groups
 $\rho_t': H^0(\mathbb {P}^4,\mathcal {O}_{\mathbb
{P}^4}(t))\to
 H^0(Q_n,\mathcal {O}_{Q_n}(t))$ is surjective,
and allows us to compute $h^0(Q_n,\mathcal {O}_{Q_n}(t)) =\binom{t+4}{4}
-\binom{t+2}{4}$. 
The restriction map 
$\rho_t : H^0(\mathbb {P}^4,\mathcal {O}_{\mathbb {P}^4}(t))\to 
H^0(X_n,\Ol_{X_n}(t))$ factors through $\rho_t'$. More precisely, we have a commutative diagram
of restriction maps between cohomology groups:
$$\xymatrix{
H^0(\Pro^4,\Ol_{\Pro^4}(t)) \ar[rr]^{\rho_{t}'} \ar[ddrr]_{\rho_{t}} & & 
H^0(Q_n, \Ol_{Q_n}(t)) \ar[dd]^{\rho_{t}''} \\ & & \\
 & & H^0(X_n, \Ol_{X_n}(t))}$$
(we recall that $Q_n$ contains $X_n$).
Since $\rho_t'$ is surjective,  we clearly have $\mbox{Im}(\rho_t) = \mbox{Im}(\rho_t'')$. 
Let us divide the rest of the proof into two steps.
\begin{enumerate}
 \item[(A)] Assume $2 \le t \le q_0$. We proved in Corollary \ref{projn} that the restriction 
map $\rho_t$ is
surjective. On the
other hand, as in the proof of Theorem \ref{ttt},  
$h^0(X_n, \Ol_{X_n}(t)) = \binom{t+4}{4} -\binom{t+2}{4}$. Hence $\rho_t''$ is
bijective. 
It follows
$\mbox{ker}(\rho_t) =\mbox{ker}(\rho_t')$, i.e., every degree $t$ hypersurface
of $\Pro^4$ 
contains $X_n$ if and
only if  it is a union of $Q_n$ and a degree $t-2$ hypersurface of $\Pro^4$.
\item[(B)] Assume $t=q_0+1$. Proposition \ref{dimension} and straightforward computations allow 
us to write the dimension of
the vector space $L(t(q+2q_0+1)P_\infty)$ as
$$\dim_{\F} L(t(q+2q_0+1)P_\infty)=\binom{q_0+5}{4}-\binom{q_0+3}{4}-4=
\binom{t+4}{4}-\binom{t+2}{4}-4.$$
Since $X_n$ is obtained by embedding $C_n$ through the linear system
$|(q+2q_0+1)P_\infty|$, we have also
$h^0(X_n,\Ol_{X_n}(t))=\binom{t+4}{4}-\binom{4+2}{4}-4$.
Since $\rho_{t}'$ is surjective, $\dim_{\F} 
\ker(\rho_{t}')=\binom{t+2}{4}$. As a consequence, we deduce the 
following inequality:
\begin{eqnarray*}
 \dim_{\F} \ker (\rho_{t}) &\ge& h^0(\Pro^4,\Ol_{\Pro^4}(t-2))- h^0(X_n,
\Ol_{X_n}(t))\\
&=& \binom{t+4}{4}- \left[ \binom{t+4}{4}-\binom{t+2}{4}-4\right] \\
&=& \binom{t+2}{4}+4 \\
&=& \dim_{\F} \ker (\rho_{t}')+4.
\end{eqnarray*}
Since $\ker (\rho_{t}') \subseteq \ker (\rho_{t})$, there must
exist at least four linearly independent hypersurfaces of $\Pro^4$ vanishing on $X_n$ and not
vanishing on $Q_n$, as claimed. \qedhere
\end{enumerate}
\end{proof}

\begin{example}\label{deg2}
 By Proposition \ref{base}, a basis of the Riemann-Roch space
$L((q+2q_0+1)P_\infty)$ is given by
$\{ 1,x,y,v,w\}$. Taking homogeneous coordinates $(x_1:x_2:x_3:x_4:x_5)$ in
$\Pro^4$,
we assume without loss of generality that $X_n$ is the embedding of $C_n$ 
defined by the following relations:
$$x_1/x_5=x, \ \ \ x_2/x_5=y, \ \ \ x_3/x_5=v, \ \ \ x_4/x_5=w.$$
It is easily checked that the degree two hypersurface $Q_n \subseteq \Pro^4$
defined by the
affine equation $x_2^2=x_1x_3+x_4$ contains $X_n$. By Corollary \ref{bombetta}, $Q_n$
 is the unique 
degree two
hypersurface of $\Pro^4$ containing $X_n$ (its equation is defined up to a 
scalar multiplication). The equations of two linearly independent degree $q_0+1$
hypersurfaces of $\Pro^4$ and not containing $Q_n$ appeared in the proof of 
Proposition \ref{chiavesurg}, step (C.2):
$$x^{q_0+1}=v^{q_0}-y, \ \ \ \ \  x^qy=w^{q_0}-v.$$
As pointed out in the Introduction, we find the same equations in \cite{AMS},
page 4.
\end{example}

\begin{remark}
 Lemma \ref{verya} provides an explicit characterization of all the very ample
linear systems of the form $|mP_\infty|$. We studied in details the case
$m=q+2q_0+1$, which provides the \textquoteleft smallest\textquoteright
\ possible embedding of $C_n$.   Other very ample linear systems can be
considered,
obtaining projective models of Suzuki curves in higher-dimensional projective
spaces. We notice that the smallest $m>q+2q_0+1$ such that $|mP_\infty|$ is
very ample is $2q+2q_0+1$. Moreover, $|(2q+2q_0+1)P_\infty|$ embeds $C_n$ into
$\Pro^9$. A systematic study of
higher-degree embeddings seems to be difficult.
\end{remark}

\section*{Conclusions}
In this paper we construct projetive smooth models of a plane Suzuki curve
$S_n$ through linear
systems of the form $|mP_\infty|$, where $P_\infty$ is the only singular point
of any $S_n$. 
Computational results on the Weirstrass semigroup at 
$P_\infty$ are applied in order to study in depth the smallest possible
embedding $X_n$, in
$\Pro^4$, from a geometric point of view. In particular, the small-degree
hypersurfaces of 
$\Pro^4$ containing $X_n$ are characterized in Corollary \ref{bombetta},
proving also that the same result cannot be extended to higher-degree
hypersurfaces. Moreover, high-degree hypersurfaces of $\Pro^4$ containing $X_n$
are explicitly 
counted. In order to derive 
such geometric results, here we solve some one-point Riemann-Roch
problems in the range which is
not trivially covered by the homonymous theorem, providing closed formulas.


\begin{thebibliography}{99}

\bibitem{AMS} H. Friedlander, D. Garton, B. Malmskog, R. Pries,
C. Weir, \emph{The a-numbers of Jacobians of Suzuki Curves}.
\url{http://arxiv.org/abs/1110.6898}, to appear in the Proceedings of the American Mathematical 
Society. 

\bibitem{fulton} W. Fulton, \emph{Algebraic Curves. An introduction to Algebraic Geometry} 
Addison-Wesley Pub. Co., Advanced Book Program, 1989.

\bibitem{Giu} M. Giulietti, G. Korchm\'{a}ros, F. Torres, 
\emph{Quotient curves of the Deligne-Lusztig curve of Suzuki type}. Acta Arith.
\textbf{122}(2006),
pp. 245 -- 274.

\bibitem{Ha} R. Hartshorne, \emph{Algebraic Geometry}. Springer, 1977.

\bibitem{He} H. W. Henn, \emph{Functionenkorper mit grosser
Automorphismengruppe}. J. Reine Angew. Math. n. 302, pp. 96 -- 115 (1978).

\bibitem{HH} J. P. Hansen, H. Stichtenoth, \emph{Group Codes on Certain Algebraic Curves with
Many Rational Points}. AAECC 1(1990), pp. 67 -- 77.

\bibitem{HKT} J. W. P. Hirschfeld, G. Korchm\'{a}ros, F. Torres, \emph{Algebraic
Curves
over a Finite Field}. Princeton Series in Applied Mathematics (2008).


\bibitem{wi1} G. Lachaud, \emph{Sommes d’Eisenstein et nombre de
points de certaines courbes alg\'{e}briques sur les corps finis}. C.R. Acad.
Sci. Paris, 305, S\'{e}rie I, pp.  729 -- 732 (1987).

\bibitem{MA} G. L. Matthews, \emph{Codes From the Suzuki Function Field}.
IEEE Information Theory, Transactions on, vol. 50, no. 12 (2004).


\bibitem{wi2} H. Stichtenoth, \emph{Algebraic Function Fields and Codes}. 
Springer-Verlag Berlin, 1993.






\end{thebibliography}
\end{document}